%%%%%%%%%%%%%%%%%%%%%%%%%%%%%%%%%%%%%%%%%%%%%%%%%%%%%%%%%%%%%%%%%%%%%%
%
%
%%%%%%%%%%%%%%%%%%%%%%%%%%%%%%%%%%%%%%%%%%%%%%%%%%%%%%%%%%%%%%%%%%%%%%

\documentclass[11pt]{amsart}
\usepackage{amssymb,amsmath}
\usepackage[nobysame]{amsrefs}
\oddsidemargin=-0.0cm
\evensidemargin=-0.0cm
\textwidth=16cm
\textheight=23cm \topmargin=-.3cm

%\usepackage{showkeys}

%%%%%%%%%%%%%%%%%%%%%%%%%%%%%%%%%%%%%%%%%%%%%%%%%%%%%%%%%%%%%%%%%%%%%%
% DEFS

\def\Dt{\partial_t}

\def\eb{\varepsilon}

\def\R {\mathbb{R}}

\def \pt {\partial_t}

\def\<{\left<}
\def\>{\right>}

\def\({\left(}
\def\){\right)}
%%%%%%%%%%%%%%%%%%%%%%%%%%%%%%%%%%%%%%%%%%%%%%%%%%%%%%%%%%%%%%%%%%%%%%

%%%%%%%%%%%%%%%%%%%%%%%%%%%%%%%%%%%%%%%%%%%%%%%%%%%%%%%%%%%%%%%%%%%%%%
\newtheorem{proposition}{Proposition}[section]
\newtheorem{theorem}[proposition]{Theorem}

\newtheorem{lemma}[proposition]{Lemma}

\theoremstyle{definition}
\newtheorem{definition}[proposition]{Definition}

\newtheorem{remark}[proposition]{Remark}

\numberwithin{equation}{section}
%%%%%%%%%%%%%%%%%%%%%%%%%%%%%%%%%%%%%%%%%%%%%%%%%%%%%%%%%%%%%%%%%%%%%%

%%%%%%%%%%%%%%%%%%%%%%%%%%%%%%%%%%%%%%%%%%%%%%%%%%%%%%%%%%%%%%%%%%%%%%
% BIBLIOGRAPHY

\def \no#1#2#3 {{\bf #1} (#3), #2.}
%\no{Vol}{Pag}{Year}
\def \eds#1#2#3 {#1, #2, #3.}
%\eds{Pub}{City}{Year}
%%%%%%%%%%%%%%%%%%%%%%%%%%%%%%%%%%%%%%%%%%%%%%%%%%%%%%%%%%%%%%%%%%%%%%

\title[Inertial manifold for the modified-Leray-$\alpha$ model on a 3D torus] {Inertial manifolds for the 3D modified-Leray-$\alpha$ model with periodic boundary conditions }
\author[A.Kostianko]{ Anna Kostianko${}^1$}
\address{${}^1$
University of Surrey, Department of Mathematics,
Guildford, GU2 7XH, United Kingdom.}
\email{a.kostianko@surrey.ac.uk}
\subjclass[2010]{35B40, 35B42, 35Q30, 76F20}
\keywords{modified-Leray-$\alpha$ model, spatial averaging principle, inertial manifold}
\begin{document}
\begin{abstract} The existence of an inertial manifold for the modified Leray-$\alpha$ model with periodic boundary conditions in three-dimensional space is proved by using the so-called spatial averaging principle. Moreover, an adaptation of the Perron method for constructing  inertial manifolds  in the particular case of zero spatial averaging is suggested.
\end{abstract}
\maketitle
\tableofcontents
\section{Introduction}\label{s0}

It is well known that the Navier-Stokes equation (NSE) is the main equation of the dynamics of the turbulent flow, but for the current state of art we are unable to solve NSE analytically and
even the global well-posedness of this equation in 3D is an open problem which is indicated by the Clay Mathematics Institute as one of 7 Millennium problems in mathematics. One of the main difficulties arising here is a strong impact of the higher modes to the leading order dynamics through the non-linearity which a priori may destroy the regularity of a solution and lead to the formation of singularities.
\par
 To overcome this difficulty, a number of modified/averaged systems has been proposed  in order to capture the leading dynamics of the flow on the one hand and somehow suppress the higher modes on the other hand. One of such systems is the so-called modified Leray-$\alpha$ (ML-$\alpha$) model:
%$$
\begin{equation}\label{main_eq}
 \begin{cases}
  u_t - \nu \Delta u + (u \cdot \nabla)\bar{u} + \nabla p=f,\\
 \nabla \cdot u = \nabla \cdot \bar{u}  = 0,\\
  u= \bar{u}- \alpha^2 \Delta \bar{u},\\
  u(0)=u_0,
 \end{cases}
\end{equation}
%$$
where the unknowns are: the fluid velocity vector $u$, the "filtered" velocity vector $\bar{u}$ and "filtered" pressure scalar $p$; given parameters are: $\nu >0$, which is the constant kinematic viscosity, and $\alpha >0$, which is a length scale parameter responsible for the width of the filter; the vector field $f$ is a given body forcing, which is assumed to be time independent, and $u_0=u_0(x)$ is the given initial velocity.

This system was introduced in \cite{Il_Ti} and it was  
 shown  there that consideration of  the ML-$\alpha$ model as
a closure model to Reynolds averaged equations in turbulent channels and pipes leads to the similar reduced system as   the Leray-$\alpha$ (see, e.g., \cite{ChHOT}) and the Navier-Stokes-$\alpha$ models (see, e.g., \cites{ChFHOTW, FHT}). Thus using \eqref{main_eq} is equally effective as using other $\alpha$ subgrid scale models in infinite channels and pipes. Moreover some important analytical properties of the ML-$\alpha$ are  proved in \cite{Il_Ti} , in particular interest for us is a global well-posedness in both 2D and 3D and the  existence of a global attractor in the properly chosen phase space.
\par
In the present study we are interested in the long-time behaviour of solutions of the system \eqref{main_eq} in \emph{three-dimensional case}. 
More precisely we are going to show the existence of an inertial manifold (IM) for the ML-$\alpha$ system subject to periodic boundary conditions.
 The analogous result for  system \eqref{main_eq} in the 2D case  was proved in \cite{HGT}.
 \par
Recall that the large class of dissipative systems possesses a resemblance of a finite-dimensional system. The notion of an inertial manifold was introduced exactly to formulate in a rigorous mathematical way what it means a finite-dimensionality for the infinite-dimensional system. By definition, IM is an invariant, finite-dimensional  Lipschitz continuous manifold, that possesses exponential tracking property, the precise definition is given in Section 2. Therefore, if such object exists we may restrict our system to the inertial manifold, and since IM is finite-dimensional, we obtain system of ODEs (so-called inertial form of the given evolution equation) that describes the limit dynamics. Since we are interested only in a long-time behaviour of the solutions, namely on the global attractor, we may modify our system outside of the absorbing ball and construct the IM for the new system, nevertheless it will not correspond the original system in general but will capture all limit dynamics which is our goal.

The concept of inertial manifolds was proposed by C. Foias, G. R. Sell and R. Temam in \cite{FST} and after that applied and evolved in many works, see, e.g., \cites{CFNT1, CFNT2, EKZ, FNS, FSTi, HGT, Mikl, Rom, Zel1}.  These papers are based on the so-called  {\it spectral gap} assumption which is a sufficient condition for the existence of an IM.  However, for more or less general underlying domains it is usually fulfilled only for parabolic equations in one-dimensional case. In the case of equation \eqref{main_eq}, the spectral gap condition is valid for a 2D torus (which is used for the proving of the existence of an IM in \cite{HGT}), but it already fails for the case of a 3D torus. Thus, the situation in the 3D case is essentially different (in comparison with the 2D case considered in \cite{HGT}), so the result of \cite{HGT} cannot be extended in a straightforward way to the 3D case  and new ideas and methods are required to handle it.
\par
  In the present paper we develop further the spatial averaging method which was originally proposed by J. Mallet-Paret and G. R. Sell in \cite{mal-par} for the scalar reaction-diffusion equation on a 3D torus and after that significantly simplified in \cite{Zel}. Roughly speaking, this method is based on the observation that only the action of the derivative $F'(u)$ of the non-linearity on the intermediate "modes" which are near the spectral gap threshold is really important for the classical construction of the IM, so the assumptions on the action on the remaining modes can be essentially relaxed. In particular, in the case where this intermediate action is close in a proper sense to a scalar operator $\<F'(u)\>$ (which is called a spatial average of the operator $F'(u)$), the spectral gap assumption can be removed, see \cite{mal-par,Zel} for more details.   
   Previously this method has been applied only to scalar equations, see \cites{Zel-Ko, kwean, mal-par, Zel}, because  in the  case of systems it is in general impossible to find a scalar operator $\<F'(u)\>$ to fulfil the condition mentioned above (the scalar operator $\<F'(u)\>$ naturally becomes a {\it matrix} in this case which is not allowed in the spatial averaging method). 
   \par
   Fortunately, we succeed to overcome this problem for equations \eqref{main_eq} due to the specific form of the Navier-Stokes non-linearity which allows us to take $\<F'(u)\>=0$. One more non-trivial problem arising here is the choice of the proper cut-off of the non-linearity $F$. We recall that the abstract constructions of the IM deal only with the globally Lipschitz non-linearities, so the initial non-linearity should be somehow cut off outside of the global attractor or absorbing ball in order to fulfil this condition. Being relatively straightforward for the case where the spectral gap condition is satisfied, this cut of procedure becomes delicate and non-trivial when the spatial averaging method is used since we should not destroy the possibility to approximate $F'(u)$ by scalar operators (on the intermediate modes) under the cut off procedure, see \cite{mal-par} for more details. Actually, the original cut-off procedure suggested in \cite{mal-par} seems not working in our case and to overcome this problem, we suggest below a new general method which is based on a direct truncation of the Fourier modes in the non-linearity $F(u)$ which does not destroy the condition $\<F'(u)\> = 0$ in the case of equations \eqref{main_eq}. We note that this method is applicable to the scalar cases mentioned before and leads to essential simplifications there as well. 
   \par 
The paper is organized as follows: in section 2 we formulate some necessary background related with the Navier-Stokes equations and give basic definitions. In section 3, some dissipativity and regularity results for equations \eqref{main_eq} in higher energy spaces (including the existences and smoothness of the corresponding global attractor) which
are necessary for our method of constructing the IM are proved. The alternative approach of constructing inertial manifolds via spatial averaging which does not utilize the cone property and the graph transform, but based on the Perron method, is developed in Section 4. 
 Finally, Section 5 is devoted to the proof of our main result on the existence of an IM for ML-$\alpha$ model \eqref{main_eq} (in particular, our new cut-off procedure is presented here) and to discussion of the analogous results concerning some other $\alpha$ models.

%The main difference between methods used for the establishing of the existence of an inertial manifold in \cite{mal-par}, \cite{Zel}, \cite{Zel_Ko} and the current paper lies in the cut-off procedure. Namely, we show that one cut-off function may be sufficient for the proving of the existence of an IM whereas in \cite{mal-par} and \cite{Zel} there used two and more cut-off functions. Moreover, another one benefit of the proposed cut-off method is that the obtained estimate \eqref{est_to_be_checked} is uniform with respect to all $w\in H$ in comparison with estimates which are proved in \cite{mal-par}, \cite{Zel} and include higher norms of $w$.

\section{Preliminaries}
In this section, we recall some basic definitions and standard results which will be used thought out the paper.
Let $\mathbb{T}^3=[-\pi,\pi]^3$ and let
   $H^s(\mathbb T^3)$, $s\in\R$,  be the classical Sobolev spaces on a torus $\mathbb T^3$.
Then every function $u \in (L^2(\mathbb T^3))^3$ can be split into the Fourier series
%$$
\begin{equation}
u(x)= \sum_{n\in \mathbb{Z}^3} u_n e^{in\cdot x}, \ \ \ u_n = \frac{1}{(2 \pi)^3}\int_{\mathbb{T}^3}u(x) e^{in \cdot x}\in \mathbb C^3,
\end{equation}
%$$
where $n \cdot x = \sum_{j=1}^3 n_j \cdot x_j$ is a standard inner product in $\mathbb{R}^3$. Moreover, due to the Plancherel theorem, the norm in the space $(H^s(\mathbb T^3))^3$ is defined by
%$$
\begin{equation}
\|u\|_{(H^s(\mathbb T^3))^3}^2=|u_0|^2+\sum_{n\in\mathbb Z^3\, n\ne0}|n|^{2s}|u_n|^2,\ \ |n|^2:=n_1^2+n_2^2+n_3^2.
\end{equation}
%$$
The Leray-Helmholtz orthoprojector $P: (L^2(\mathbb T^3))^3\to H:=P(L^2(\mathbb T^3))^3$ to divergent free vector fields with zero mean can be defined as follows
%$$
\begin{equation}
P u:=\sum_{n\in\mathbb Z^3\, n\ne0}P_n u_n e^{ix\cdot n},\ \ u=\sum_{n\in\mathbb Z^3}u_ne^{in\cdot x}
\end{equation}
%$$
and the $3\times3$-matrices $P_n$ are defined by
%$$
\begin{equation}\label{0.pmatrix}
P_n:=\frac1{|n|^2}\(\begin{matrix} n_2^2+n_3^2&,&-n_1n_2&,&-n_1n_3\\-n_1n_2&,&n_1^2+n_3^2&,&-n_2n_3
\\-n_1n_3&,&-n_2n_3&,&n_1^2+n_2^2\end{matrix}\).
\end{equation}
%$$
All of the matrices $P_n$ are orthonormal projectors in $\mathbb C^3$, so their norms are equal to one: $\|P_n\|=1$, for all $n\ne0$. Moreover, the projector $P$ commutes with the Laplacian:
$$
\Delta P=P\Delta
$$
and we may define the Stokes operator $A:=-P\Delta$ as the restriction of the Laplacian to the divergence free vector fields. The domain $D(A)$ of this operator is given by
$$
D(A):= (H^2(\mathbb{T}^3))^3 \cap \{\nabla \cdot u = 0\}\cap\{\<u\>=0\}.
$$
We define a scale of Hilbert spaces $H^s:=D(A^{s/2})$, $s\in\R$. Then, as not difficult to show
\begin{equation}
Au = -P \Delta u = -\Delta u, \ \ \text{ for all } u \in D(A).
\end{equation}

%The operator $A$ is a self-adjoint, positive operator with compact inverse.
%Then due to Hilbert-Schmidt theorem the operator $A$ possesses the complete orthogonal system of eigenvectors $ \{e_n\}_{n=1}^{\infty}$ which corresponds to eigenvalues $1= \lambda_1\le \lambda_2 \le \cdots$.
% Moreover eigenvalues of operator $A$ are parameterised by the points of lattice $n \in \mathbb{Z}^3\setminus\{0\}$
%\begin{equation}
%A e_n = \lambda_n e_n, \ \ \ e_n: = e^{i n \cdot x}, \ \ \lambda_n = |n|^2,
%\end{equation}
%where $|n|^2 = n\cdot n$.

 $$
 H^s:=D(A^{s/2}) = (H^s(\mathbb{T}^3))^3\cap \{\nabla \cdot u = 0\}\cap\{\<u\>=0\}
 $$
(see, e.g., \cite{Tem}) and, due to the Parseval equality, the norm in this space can be defined by
%$$
\begin{equation}\label{Poin}
 \|u\|^2_{H^s} = \sum_{n\in \mathbb{Z}^3, n \not = 0} |n|^{2s}u_n^2,\ \ u\in H^s.
\end{equation}
%$$
For $u_1, u_2 \in H^1$ we define the standard bilinear form associated with the Navier-Stokes equation:
%$$
\begin{equation}
B(u_1,u_2):=P((u_1 \cdot \nabla)u_2).
\end{equation}
%$$
The bilinear form $B: H^1 \times H^1 \to H^{-1}$ is continuous and satisfies the following estimates:
%$$
\begin{equation}\label{est_B}
 |(B(u_1,u_2),u_3)|\le c \|u_1\|_{H^1}\|u_2\|_{H^1}^{\frac{1}{2}}\|Au_2\|^{\frac{1}{2}}_H \|u_3\|_H,
\end{equation}
%$$
for every $u_1 \in H^1$, $u_2 \in H^2$ and $u_3 \in H$ (here and below $(u,v)$ stands for the standard inner product in $(L^2(\mathbb T^3))^3$),
%$$
\begin{equation}\label{est_B_2}
 |(B(u_1,u_2),u_3)| \le c\|u_1\|^{1/2}_H\|u_1\|^{1/2}_{H^1}\|u_2\|_{H^1} \|u_3\|_{H^1},
\end{equation}
%$$
and
%$$
\begin{equation}
(B(u_1,u_2), u_3) = - (B(u_1,u_3), u_2), \ \text{ for every } u_1, u_2, u_3 \in H^1
\end{equation}
%$$
and, consequently,
%$$
\begin{equation}\label{est_B_3}
(B(u_1,u_2), u_2) = 0, \ \text{ for every } u_1, u_2 \in H^1.
\end{equation}
%$$
Applying the Helmholtz-Leray orthogonal projection $P$ to equation \eqref{main_eq} and observing that $u = \bar{u} + \alpha^2 A \bar{u}$ we get
%$$
\begin{equation}\label{main_mod}
 \begin{cases}
   u_t + \nu A u + B(u,\bar{u}) = f,\\
   u= \bar{u}+ \alpha^2 A \bar{u}.\\
   u(0)=u_0,
 \end{cases}
\end{equation}
%$$
where we assume that $P f = f$. We always can do so due to the modification of the pressure $p$ in such a way that it includes the gradient part of $f$.

Also, from the second equation of \eqref{main_mod}, we have
%$$
\begin{equation} \label{bar_u_u}
\frac1{1+\alpha^2}\|u\|^2_H\le\|\bar u\|_{H^2} \le \frac1{\alpha^2} \| u\|_{H}.
\end{equation}
%$$
Let us define the main object of this paper, namely, the inertial manifold (IM) associated with the modified-Leray-$\alpha$ model.
\begin{definition}\label{defman}
A subset $\mathcal{M} \subset H$ is called an inertial manifold for problem \eqref{main_mod} if the following conditions are satisfied:

1. $\mathcal{M}$ is invariant with respect to the solution semigroup $S(t)$, i.e. $S(t)\mathcal{M}\subset \mathcal{M}$, for all $t\ge 0$;

2. $\mathcal{M}$ is a finite-dimensional Lipschitz manifold,
i.e. there exist an open set $V\subset\mathcal{P}_NH$ and  a Lipschitz continuous function $\Phi: \mathcal{P}_NH \to \mathcal{Q}_N H$ such that
%$$
\begin{equation}
\mathcal{M}:=\{u_+ + \Phi(u_+),\, u_+ \in V\},
\end{equation}
%$$
where $\mathcal{P}_N$ is the orthoprojector to the first $N$ Fourier modes and $\mathcal{Q}_N = Id -\mathcal{P}_N$.

3. $\mathcal{M}$ attracts exponentially all solutions of \eqref{main_mod}, i.e. there exist positive constants $C$ and $\gamma$ such that for every $u_0 \in H$ there exists $v_0 \in \mathcal{M}$ such that
%$$
\begin{equation}
\|S(t)u_0-S(t)v_0\|_H \le C e^{-\gamma t}\|u_0-v_0\|_H,\  t\ge0.
\end{equation}
%$$
\end{definition}

\section{Dissipative estimates}
This section is devoted to the proof of the boundedness of the attractor for the semigroup $S(t)$ associated with equation \eqref{main_mod} in $H^3$-norm. These estimates are important in the  proving of boundedness and Lipschitz continuity of the cut-off version of the non-linearity, which is one of the conditions for the existence of an IM (see Theorem \eqref{main_th}).
The reasoning provided here is formal and can be justified by using, e. g., Galerkin approximation scheme.
In \cite{Il_Ti} there were obtained $H^1$ and $H^2$-estimates on $\bar{u}(t)$  which we are going to use in order to prove the boundedness of the $H^3$-norm of $w:=u-v$, where $u$ is a solution of the  problem \eqref{main_mod} and $v$ is a solution of the stationary problem
%$$
\begin{equation}\label{stat_pr}
 \begin{cases}
    \nu A v + B(v,\bar{v}) =f,\\
    v =\bar{v} + \alpha^2 A \bar{v}.
 \end{cases}.
\end{equation}
%$$
We start with proving the required regularity result for the auxiliary equation \eqref{stat_pr}.
\begin{lemma} Let the function $f\in H$ and $\alpha,\nu>0$. Then, there exists at least one weak solution $v\in H^1$ of the elliptic problem \eqref{stat_pr} and any  solution $v\in H^2$ and the following estimate holds
%$$
\begin{equation}\label{3.disV}
\|v\|_{H^2}\le C\|f\|_H(1+\|f\|_H^{4/3}),
\end{equation}
%$$
where the constant $C$ depends on $\alpha$ and $\nu$ and is independent of $v$ and $f$.
\end{lemma}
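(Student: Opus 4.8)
The plan is to prove the statement in two stages: first I would derive a hierarchy of \emph{a priori} estimates for any sufficiently regular solution of \eqref{stat_pr}, culminating in \eqref{3.disV}, and only afterwards recover existence and the claimed regularity by a Galerkin-plus-compactness argument to which those estimates apply. Throughout, the estimates come from pairing the first equation of \eqref{stat_pr} with well-chosen multipliers and exploiting the orthogonality \eqref{est_B_3}, which is exactly what keeps the low-order norms \emph{linear} in $\|f\|_H$.

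For the basic energy layer I would test the equation against $\bar v$. Since $v=\bar v+\alpha^2 A\bar v$ we get $(Av,\bar v)=\|\bar v\|_{H^1}^2+\alpha^2\|\bar v\|_{H^2}^2$, while $(B(v,\bar v),\bar v)=0$ by \eqref{est_B_3}. Hence $\nu(\|\bar v\|_{H^1}^2+\alpha^2\|\bar v\|_{H^2}^2)=(f,\bar v)\le\|f\|_H\|\bar v\|_{H^1}$, which gives first $\|\bar v\|_{H^1}\le\nu^{-1}\|f\|_H$ and then $\|\bar v\|_{H^2}\le(\nu\alpha)^{-1}\|f\|_H$; feeding these back into $v=\bar v+\alpha^2 A\bar v$ yields $\|v\|_H\le C\|f\|_H$ with $C=C(\alpha,\nu)$. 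The structural point is that all three quantities are \emph{linear} in $\|f\|_H$; the quadratic growth of the nonlinearity only enters at the next, higher-order, level.

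The main work, and the main obstacle, is the $H^2$-bound, where the exponent $4/3$ is delicate. I would test the equation with $Av$, obtaining $\nu\|v\|_{H^2}^2=(f,Av)-(B(v,\bar v),Av)$, so everything hinges on bounding the nonlinear term with the \emph{sharp} power of $\|v\|$. The naive route through \eqref{est_B} costs a full factor $\|v\|_{H^1}$ and, after interpolation and Young, only delivers $\|v\|_{H^2}\le C\|f\|_H(1+\|f\|_H^2)$, which is weaker than \eqref{3.disV}. To recover $4/3$ I would instead estimate $B(v,\bar v)=P((v\cdot\nabla)\bar v)$ directly, loading the extra regularity onto the \emph{smooth} factor $\bar v$: by H\"older together with the 3D embeddings $H^{1/2}\hookrightarrow L^3$ and $H^1\hookrightarrow L^6$, $\|B(v,\bar v)\|_H\le\|v\|_{L^3}\|\nabla\bar v\|_{L^6}\le C\|v\|_H^{1/2}\|v\|_{H^1}^{1/2}\|\bar v\|_{H^2}$. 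This half-power of $\|v\|_{H^1}$ (rather than the full power from \eqref{est_B}) is the decisive gain. Using $|(B(v,\bar v),Av)|\le\|B(v,\bar v)\|_H\|v\|_{H^2}$ and the interpolation $\|v\|_{H^1}\le\|v\|_H^{1/2}\|v\|_{H^2}^{1/2}$ leaves $\nu\|v\|_{H^2}\le\|f\|_H+C\|v\|_H^{3/4}\|\bar v\|_{H^2}\|v\|_{H^2}^{1/4}$; Young's inequality with conjugate exponents $4$ and $4/3$ absorbs $\|v\|_{H^2}^{1/4}$ into the left-hand side and produces $\tfrac\nu2\|v\|_{H^2}\le\|f\|_H+C\|v\|_H\|\bar v\|_{H^2}^{4/3}$. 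Inserting the \emph{linear} bounds $\|v\|_H,\|\bar v\|_{H^2}\le C\|f\|_H$ from the previous step turns the last term into $C\|f\|_H^{1+4/3}$, which is exactly \eqref{3.disV}.

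Finally, for existence I would run a Galerkin scheme on the finite-dimensional spaces $\mathcal P_m H$: the projected algebraic system has a zero by a standard consequence of Brouwer's theorem, since pairing it with $\bar v_m$ makes the nonlinearity vanish and gives coercivity once $\|v_m\|_H$ is large, and the uniform $H^2$-bound just derived lets me extract a subsequence converging weakly in $H^2$ and strongly in $H^1$ by compact embedding. Strong $H^1$-convergence suffices to pass to the limit in the quadratic term, so the limit lies in $H^2\subset H^1$ and solves \eqref{stat_pr}. For the regularity assertion that \emph{any} weak solution $v\in H^1$ is automatically in $H^2$, I would read the equation as $\nu Av=f-B(v,\bar v)$: if $v\in H^1$ then $\bar v\in H^3$ and the crude bound $\|B(v,\bar v)\|_H\le C\|v\|_{H^1}\|\bar v\|_{H^2}<\infty$ shows $Av\in H$, i.e. $v\in H^2$; once this is known, the test against $Av$ above is legitimate and yields the quantitative estimate \eqref{3.disV}.
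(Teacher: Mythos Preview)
Your proof is correct and follows essentially the same two-step scheme as the paper: test with $\bar v$ to get the linear bounds $\|v\|_H,\|\bar v\|_{H^2}\le C\|f\|_H$, then test with $Av$ and control $(B(v,\bar v),Av)$ by a product--interpolation estimate yielding the same $\|v\|_H^{7/4}\|Av\|_H^{5/4}$ structure (the paper reaches it via $L^2\times L^\infty$ and Agmon interpolation on $\nabla\bar v$, you via $L^3\times L^6$ and $H^{1/2}$ interpolation on $v$), followed by Young's inequality to produce the exponent $4/3$. Your additional treatment of existence by Galerkin/Brouwer and the $H^1\!\to\!H^2$ bootstrap is sound and simply fills in what the paper declares ``can be justified in a standard way.''
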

\begin{proof} We give below only the formal derivation of estimate \eqref{3.disV} which can be justified in a standard way. First, we multiply \eqref{stat_pr} by $\bar v$ and integrate over $\mathbb T^3$. Then, using that $v=\bar v+\alpha^2A\bar v$ together with \eqref{est_B_3}, we get
$$
\nu\|\bar v\|^2_{H^1}+\nu\alpha^2\|\bar v\|^2_{H^2}=(f,\bar v)
$$
and, consequently, by the Cauchy-Schwarz inequality and inequality \eqref{bar_u_u},
%$$
\begin{equation}\label{3.l2}
\|v\|^2_H\le(1+\alpha^2)\|\bar v\|^2_{H^2}\le \frac{1+\alpha^2}{4\alpha^2\nu^2}\|f\|^2_H.
\end{equation}
%$$
In order to obtain the $H^2$-estimate for $v$, we multiply equation \eqref{stat_pr} by $Av$ and estimate the non-linear term as follows:
%$$
\begin{multline}
|(B(v,\bar v),Av)|\le C\|v\|_H\|Av\|_H\|\nabla\bar v\|_{L^\infty}\le\\\le C_1\|v\|_H\|\bar v\|_{H^2}^{3/4}\|\bar v\|_{H^4}^{1/4}\|Av\|_{H}\le C_2\|v\|_H^{7/4}\|Av\|^{5/4}_H,
\end{multline}
%$$
where we have used the interpolation inequality $\|u\|_{L^\infty}\le C\|u\|_{H^1}^{3/4}\|u\|_{H^3}^{1/4}$ and inequality \eqref{bar_u_u}. This together with the Cauchy-Schwarz inequality gives
$$
\nu\|Av\|^2_H\le C_2\|v\|_H^{7/4}\|Av\|^{5/4}_H+\|f\|_H\|Av\|_H.
$$
Then, by the Young inequality, we have
$$
\|Av\|^2_H\le C\|v\|^{14/3}_H+C\|f\|^2_H
$$
and combining this estimate with \eqref{3.l2}, we end up with the desired estimate \eqref{3.disV} and finish the proof of the lemma.
\end{proof}

We now turn to the non-autonomous equation and state the well-posedness and solvability results obtained in \cite{Il_Ti}.
\begin{theorem}\label{Il_Ti}
Let $f\in H$, then problem \eqref{main_mod} is uniquely solvable for any  $\bar{u}_0 \in H^1$. Moreover, for any solution $\bar{u}(t)$  of equation \eqref{main_mod}  the following properties are valid:

1) Dissipativity of $\bar{u}(t)$ in $H^s$, $s=1,2$ :
\begin{equation} \label{dis_bu}
\|\bar{u}(t)\|_{H^s} \le e^{-\gamma t}Q_s(\|\bar{u}(0)\|_{H^s}) + K,
\end{equation}
for some monotone function $Q$ which depends on $s$, $\alpha$ and $\nu$, constant $K$ which depends on $s$,$\alpha$, $\nu$ and $\|f\|_H$, and constant $\gamma$ which depends on $\nu$.
%for some positive constants $ C, K$ which depend on $\alpha$, $\nu$ and $\|f\|_H$;

2) Smoothing property:
\begin{equation}\label{smooth}
\|\bar{u}(t)\|_{H^2} \le C t^{-1/2}Q(\|\bar{u}(0)\|_{H^1}) + K,
\end{equation}
for some positive constants $ C, K$ which depend on $\alpha$, $\nu$ and $\|f\|_H$.
%3) Dissipativity of $u(t)$ in $H$
%\begin{equation}\label{dis_u}
%\|{u}(t)\|^2_{H} \le  e^{-\gamma t}Q(\|{u}(0)\|_{H}) + K,
%\end{equation}
%for some monotone function $Q$ which depends on $\alpha$ and $\nu$ and constant $K$ which depends on $\alpha$, $\nu$ and $\|f\|_H$.
\end{theorem}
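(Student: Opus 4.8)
The plan is to obtain the stated bounds by the standard scheme of \emph{a priori} energy estimates on a Galerkin approximation, exactly as in \cite{Il_Ti}. The algebraic identity that drives everything is that testing equation \eqref{main_mod} against $A^s\bar u$ and using $u=\bar u+\alpha^2A\bar u$ converts the inertial and viscous terms into
\[
(u_t,A^s\bar u)=\tfrac12\frac{d}{dt}\(\|\bar u\|_{H^s}^2+\alpha^2\|\bar u\|_{H^{s+1}}^2\),\qquad \nu(Au,A^s\bar u)=\nu\(\|\bar u\|_{H^{s+1}}^2+\alpha^2\|\bar u\|_{H^{s+2}}^2\),
\]
so that the natural energy at level $s$ is $E_s:=\|\bar u\|_{H^s}^2+\alpha^2\|\bar u\|_{H^{s+1}}^2$ and the associated dissipation controls $\|\bar u\|_{H^{s+2}}$.

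For the basic level I test against $\bar u$ (the case $s=0$). The nonlinearity drops out by the orthogonality \eqref{est_B_3}, $(B(u,\bar u),\bar u)=0$, leaving
\[
\tfrac12\frac{d}{dt}E_0+\nu\(\|\bar u\|_{H^1}^2+\alpha^2\|\bar u\|_{H^2}^2\)=(f,\bar u).
\]
Bounding the right-hand side by Cauchy--Schwarz and the Poincaré inequality and invoking Gronwall's lemma gives the dissipative estimate \eqref{dis_bu} with $s=1$ (since $E_0$ is equivalent to $\|\bar u\|_{H^1}^2$), and, after integration, the uniform bound on $\int_t^{t+1}\|\bar u(\tau)\|_{H^2}^2\,d\tau$ that I will need below.

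The $s=2$ estimate is the crux. Testing against $A\bar u$ produces
\[
\tfrac12\frac{d}{dt}E_1+\nu\(\|\bar u\|_{H^2}^2+\alpha^2\|\bar u\|_{H^3}^2\)=(f,A\bar u)-(B(u,\bar u),A\bar u),
\]
and the whole difficulty is the trilinear term. Using \eqref{est_B} with the substitution $u=\bar u+\alpha^2A\bar u$ gives
\[
|(B(u,\bar u),A\bar u)|\le c\|u\|_{H^1}\|\bar u\|_{H^1}^{1/2}\|\bar u\|_{H^2}^{3/2},\qquad \|u\|_{H^1}\le\|\bar u\|_{H^1}+\alpha^2\|\bar u\|_{H^3}.
\]
The top-order contribution $\alpha^2\|\bar u\|_{H^3}\|\bar u\|_{H^1}^{1/2}\|\bar u\|_{H^2}^{3/2}$ is split by Young's inequality as $\tfrac{\nu\alpha^2}{2}\|\bar u\|_{H^3}^2+C\(\|\bar u\|_{H^1}\|\bar u\|_{H^2}\)\|\bar u\|_{H^2}^2$, the first piece being absorbed into the dissipation; the remaining contribution $c\|\bar u\|_{H^1}^{3/2}\|\bar u\|_{H^2}^{3/2}$ is similarly dominated by $\tfrac{\nu}{2}\|\bar u\|_{H^2}^2+C\|\bar u\|_{H^1}^6$. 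Hence $E_1$ satisfies a differential inequality of the form $\frac{d}{dt}E_1+\gamma E_1\le C+g(t)E_1$, where $g(t):=C\|\bar u\|_{H^1}\|\bar u\|_{H^2}$ has $\int_t^{t+1}g\,d\tau$ bounded thanks to the $H^1$-bound and the integral bound on $\|\bar u\|_{H^2}^2$. Since, by the same integral bound, $\int_t^{t+1}E_1\,d\tau$ is also bounded, the uniform Gronwall lemma yields the pointwise dissipative bound \eqref{dis_bu} with $s=2$.

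Finally, the smoothing property \eqref{smooth} follows from the very same differential inequality for $E_1$: on $(0,t)$ the quantity $E_1$ is integrable (from the basic estimate) while its growth is controlled by $g$, so the uniform Gronwall lemma delivers a pointwise $H^2$-bound with the characteristic $t^{-1/2}$ blow-up as $t\to0^+$, uniformly over $H^1$-bounded data. Existence is then obtained by passing to the limit in the Galerkin scheme using these \emph{a priori} bounds together with Aubin--Lions compactness, and uniqueness by writing the equation for the difference $w=u^1-u^2$ of two solutions, testing against $\bar w$, and estimating $(B(w,\bar u^1),\bar w)$ via \eqref{est_B_2} before applying Gronwall. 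I expect the nonlinear estimate at the $s=2$ level to be the main obstacle: the factor $\alpha^2A\bar u$ inside $u$ raises the effective order of differentiation in the transport term, and only a careful interpolation keeps the top-order part absorbable into $\nu\alpha^2\|\bar u\|_{H^3}^2$ while leaving an integrally-controlled remainder.
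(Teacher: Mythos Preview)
Your proposal is correct and follows essentially the same scheme as the paper: test \eqref{main_mod} against $\bar u$ to get the $H^1$ dissipative bound and the integral control of $\|\bar u\|_{H^2}^2$, then test against $A\bar u$, absorb the top-order part of the trilinear term into $\nu\alpha^2\|\bar u\|_{H^3}^2$, and close with a Gronwall-type argument; the smoothing property is obtained from the same differential inequality by the $t$-weighting / uniform Gronwall trick.

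The only noteworthy difference is in the treatment of the trilinear term at the $H^2$ level. You invoke \eqref{est_B} and then substitute $\|u\|_{H^1}\le\|\bar u\|_{H^1}+\alpha^2\|\bar u\|_{H^3}$, which produces a single factor of $\|\bar u\|_{H^3}$ and leaves the remainder $C\|\bar u\|_{H^1}\|\bar u\|_{H^2}\cdot\|\bar u\|_{H^2}^2+C\|\bar u\|_{H^1}^6$. The paper instead uses \eqref{est_B_2} together with $\|u\|_H\le C\|\bar u\|_{H^2}$, $\|u\|_{H^1}\le C\|\bar u\|_{H^3}$, arriving at $C\|\bar u\|_{H^1}\|\bar u\|_{H^2}^{1/2}\|\bar u\|_{H^3}^{3/2}$ and, after Young, the remainder $C_\nu\|\bar u\|_{H^1}^4\|\bar u\|_{H^2}^2$. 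Both remainders are of the form $g(t)E_1$ with $\int_t^{t+1}g\,d\tau$ controlled by the first-level estimate, so either route closes the argument.
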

\begin{proof}
For the sake of completeness we provide a sketch of the proof of the estimates \eqref{dis_bu} and \eqref{smooth}  from \cite{Il_Ti}.
\par
To verify the dissipative estimate for $\bar u(t)\in H^{1}$,
we take the inner product of \eqref{main_mod} with $\bar{u}$ and obtain
\begin{equation}
\frac{1}{2} \frac{d}{dt}(\|\bar{u}\|_H^2 + \alpha^2\|\bar{u}\|_{H^1}^2) + \nu (\|\bar{u}\|^2_{H^1} + \alpha^2 \|\bar{u}\|^2_{H^2}) + (B(u,\bar{u}), \bar{u}) = (f, \bar{u}).
\end{equation}
Since $f \in H$, due to the Cauchy-Schwarz and Young's inequalities, we have
\begin{equation}
|(f,\bar{u})| \le \frac1{2 \nu}\| f\|^2_H + \frac{\nu}{2} \| \bar{u}\|^2_H \le \frac1{2 \nu}\| f\|^2_H + \frac{\nu}{2} \| \bar{u}\|^2_{H^1}.
\end{equation}
Using property \eqref{est_B_3} of the  bilinear form $B$, we get
\begin{equation}\label{bu_in}
\frac{d}{dt}(\|\bar{u}\|^2_H + \alpha^2 \|\bar{u}\|^2_{H^1}) + \nu (\|\bar{u}\|^2_{H^1} + \alpha^2 \|\bar{u}\|^2_{H^2}) \le \frac1{\nu}\| f\|^2_H.
\end{equation}
Using Poincare's inequality, we obtain
\begin{equation}
\frac{d}{dt}(\|\bar{u}\|^2_H + \alpha^2 \|\bar{u}\|^2_{H^1}) + \nu (\|\bar{u}\|^2_{H} + \alpha^2 \|\bar{u}\|^2_{H^1}) \le \frac1{\nu}\| f\|^2_H.
\end{equation}
Finally, application of Gronwall's inequality gives us
\begin{equation}\label{bu}
\|\bar{u}(t)\|^2_H + \alpha^2 \| \bar{u}(t)\|^2_{H^1} \le e^{-\nu t}(\|\bar{u}(0)\|^2_H + \alpha^2 \| \bar{u}(0)\|^2_{H^1}) + C_{\nu} \|f\|^2_H.
\end{equation}
Thus we prove the desired estimate \eqref{dis_bu} for $s=1$. Moreover, integrating \eqref{bu_in} from $t$ to $t+1$ and using \eqref{bu}, we derive that
%$$
\begin{equation}\label{bu-av}
\int_t^{t+1}\|\bar u(s)\|^2_{H^1}+\alpha^2\|\bar u(s)\|^2_{H^2}\,ds\le  Ce^{-\nu t}(\|\bar{u}(0)\|^2_H + \alpha^2 \| \bar{u}(0)\|^2_{H^1}) + C \|f\|^2_H.
\end{equation}
%$$
Let us deduce now the dissipative estimate for $\bar u\in H^2$.
To this end, we multiply \eqref{main_mod} on $A\bar{u}$. We get
\begin{equation} \label{sm1_H}
\frac{1}{2} \frac{d}{dt}(\|\bar{u}\|^2_{H^1} + \alpha ^2 \|\bar{u}\|^2_{H^2}) + \nu(\|\bar{u}\|^2_{H^2} + \alpha^2\|\bar{u} \|^2_{H^3}) + (B(u,\bar{u}), A\bar{u}) = (f, A \bar{u}).
\end{equation}
Obviously,
\begin{equation}\label{f_H}
| (f, A \bar{u})| \le \frac{1}{2\nu} \|f\|^2_H + \frac{\nu}{2} \|A \bar{u}\|^2_H.
\end{equation}
Due to estimates \eqref{est_B_2}, \eqref{bar_u_u} and H\"older's inequality
%$$
\begin{multline}\label{B_H}
|(B(u, \bar{u}),A \bar{u})| \le C \|u\|_H^{1/2} \|u\|^{1/2}_{H^1} \|\bar{u}\|_{H^1} \|A\bar{u}\|_{H^1} \le  \bar{C} \|\bar{u}\|_{H^1} \|\bar{u}\|^{1/2}_{H^2} \| \bar{u}\|^{3/2}_{H^3} \le \\ \frac12C_\nu \|\bar{u}\|^4_{H^1} \|\bar{u}\|^2_{H^2} + \frac{\nu\alpha^2}{2}\|\bar{u} \|^2_{H^3}.
\end{multline}
%$$
Substitution of \eqref{f_H} and \eqref{B_H} into \eqref{sm1_H} gives us
%$$
\begin{equation}\label{sm2_H}
 \frac{d}{dt}(\|\bar{u}\|^2_{H^1} + \alpha ^2 \|\bar{u}\|^2_{H^2}) + \nu(\|\bar{u}\|^2_{H^2} + \alpha^2\|\bar{u} \|^2_{H^3})  \le \frac{1}{\nu} \|f\|^2_H + C_\nu \|\bar{u}\|^4_{H^1} \|\bar{u}\|^2_{H^2}.
\end{equation}
%$$
Applying the Gronwall inequality to \eqref{sm2_H} and using estimates \eqref{bu} and \eqref{bu-av} for estimating the right-hand side of \eqref{sm2_H}, we end up with the desired dissipative estimate for $s=2$.
\par
Let us finally deduce the smoothing property \eqref{smooth}.
To this end, we integrate \eqref{bu_in} from $0$ to $t$ and obtain
%$$
\begin{equation}\label{L_2_L_2}
\nu \int_0^t (\|u(\tau)\|^2_{H^1} + \alpha^2 \|\bar{u}(\tau)\|^2_{H^2}) d\tau \le \|\bar{u}(0)\|^2_{H} + \alpha^2 \|\bar{u}(0)\|^2_{H^1} + t \frac{1}{\nu}\|f\|^2_H.
\end{equation}
%$$
Multiplying \eqref{sm2_H} by $t$, integrating from $0$ to $t$ and using \eqref{L_2_L_2}, we finally arrive at
%$$
\begin{equation}
t (\|\bar{u}(t)\|^2_{H^1} + \alpha ^2 \|\bar{u}(t)\|^2_{H^2})\le  C_{\nu}(\|\bar{u}(0)\|^2_{H} + \alpha^2 \|\bar{u}(0)\|^2_{H^1}) + t Q_1(\|\bar{u}(0)\|_{H^1}) + t Q_2(\|f\|_H),
\end{equation}
%$$
and smoothing property \eqref{smooth} is proved and the theorem is also proved.
\end{proof}
%The next step in obtaining the desired $H^3$-estimate  is the following Theorem on estimates of lower norms.
\begin{theorem}
Let $f \in H$ then for any solution $u(t)$ of the problem $\eqref{main_mod}$ the following dissipative estimates hold:
\begin{equation}\label{dis_H_s_u}
\|{u}(t)\|_{H^s} \le Q_s (\|{u}(0)\|_{H^s})e^{-\gamma t} + K,
\end{equation}
where $s = 1,2$, the monotone function $Q$ depends on $s, \nu$ and $\alpha$, the positive constant $\gamma$ depends on $\nu$ and $K$ depends on $s, \nu$ and $\|f\|_H $. Moreover the following smoothing property is valid:
\begin{equation}\label{smooth_H_s_u}
\|u(t)\|_{H_s} \le t^{-N_s} Q_s(\|u(0)\|_H) +K,\ \ t>0,
\end{equation}
where as before $s= 1,2$.
\end{theorem}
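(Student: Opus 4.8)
The plan is to transfer the already-established control of $\bar u$ to $u$ by exploiting the relation $u=(1+\alpha^2 A)\bar u$. In Fourier variables $u_n=(1+\alpha^2|n|^2)\bar u_n$, so by Parseval one has, for every $s$, the two-sided equivalence $\alpha^4\|\bar u\|^2_{H^{s+2}}\le\|u\|^2_{H^s}\le(1+\alpha^2)^2\|\bar u\|^2_{H^{s+2}}$, which is just a generalization of \eqref{bar_u_u}. In particular $\|u\|_H$ is comparable to $\|\bar u\|_{H^2}$, so Theorem \ref{Il_Ti} already furnishes a dissipative bound for $\|u(t)\|_H$ and, via \eqref{bu-av}, a uniform bound for $\int_t^{t+1}\|\bar u(s)\|^2_{H^2}\,ds$ and hence for $\int_t^{t+1}\|u(s)\|^2_H\,ds$. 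These serve as the base case for a regularity bootstrap carried out directly on the $u$-equation \eqref{main_mod}.

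For the $H^1$-estimate I would multiply \eqref{main_mod} by $Au$, producing $\tfrac12\tfrac{d}{dt}\|u\|^2_{H^1}+\nu\|u\|^2_{H^2}+(B(u,\bar u),Au)=(f,Au)$. The nonlinear term is controlled by \eqref{est_B} as $|(B(u,\bar u),Au)|\le c\|u\|_{H^1}\|\bar u\|^{1/2}_{H^1}\|\bar u\|^{1/2}_{H^2}\|u\|_{H^2}$; absorbing $\|u\|_{H^2}$ and $(f,Au)$ by Young's inequality and using $\|u\|_{H^2}\ge\|u\|_{H^1}$ (Poincaré) yields a differential inequality $\tfrac{d}{dt}\|u\|^2_{H^1}+\nu\|u\|^2_{H^1}\le g(t)\|u\|^2_{H^1}+C\|f\|^2_H$ with $g(t)=C\|\bar u\|_{H^1}\|\bar u\|_{H^2}$. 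By Theorem \ref{Il_Ti} this coefficient has a bounded time-average but does not decay, so naive Gronwall fails; instead I would invoke the uniform Gronwall lemma, for which the missing ingredient $\int_t^{t+1}\|u(s)\|^2_{H^1}\,ds\le C$ is obtained by integrating the $H$-level energy identity (multiply \eqref{main_mod} by $u$, estimate $(B(u,\bar u),u)$ by \eqref{est_B_2}) over $[t,t+1]$ using the already-known bound on $\|u\|_H$. This gives dissipativity of $\|u(t)\|_{H^1}$, which is then put into the sharp form \eqref{dis_H_s_u} by a standard absorbing-ball argument.

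The $H^2$-estimate is handled the same way, multiplying \eqref{main_mod} by $A^2u$ to get $\tfrac12\tfrac{d}{dt}\|u\|^2_{H^2}+\nu\|u\|^2_{H^3}+(B(u,\bar u),A^2u)=(f,A^2u)$. Here the only genuinely new point is the higher-order nonlinear estimate: integrating by parts and using the product/Sobolev inequalities on $\mathbb T^3$ (together with the norm equivalence above), one bounds $\|B(u,\bar u)\|_{H^1}\le C\|u\|_{H^2}\|\bar u\|_{H^2}\le C\|u\|_{H^2}\|u\|_H$, whence $|(B(u,\bar u),A^2u)|\le C\|u\|_H\|u\|_{H^2}\|u\|_{H^3}\le\tfrac{\nu}{2}\|u\|^2_{H^3}+C_\nu\|u\|^2_H\|u\|^2_{H^2}$. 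Since $\|u\|_H$ is already bounded and $\int_t^{t+1}\|u(s)\|^2_{H^2}\,ds\le C$ follows by integrating the $H^1$-inequality of the previous step, the uniform Gronwall lemma again closes the estimate and yields \eqref{dis_H_s_u} for $s=2$.

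Finally, the smoothing property \eqref{smooth_H_s_u} is obtained exactly as \eqref{smooth} was in Theorem \ref{Il_Ti}: multiplying each of the above energy inequalities by a suitable power of $t$ and integrating from $0$ to $t$, one trades the integrated dissipation (controlled in terms of $\|u(0)\|_H$ alone, via the base bounds of the first paragraph) for an instantaneous bound with the parabolic rate $t^{-N_s}$, where $N_1=\tfrac12$ and $N_2=1$. The main obstacle throughout is not any single estimate but the fact that the natural Gronwall coefficients (e.g.\ $\|\bar u\|_{H^1}\|\bar u\|_{H^2}$ and $\|u\|^2_H$) are only bounded on average and do \emph{not} tend to zero; overcoming this forces the use of the uniform Gronwall lemma fed by the time-integrals of the next-lower estimate, plus the bookkeeping needed to convert its boundedness conclusion into the clean exponential-decay form \eqref{dis_H_s_u} and the precise smoothing exponents.
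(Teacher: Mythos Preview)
Your $H^1$ argument is correct in spirit, though the paper handles it more directly: instead of leaving the coefficient $g(t)=C\|\bar u\|_{H^1}\|\bar u\|_{H^2}$ in front of $\|u\|^2_{H^1}$ and invoking uniform Gronwall, the paper uses the smoothing $\|\bar u\|_{H^1},\|\bar u\|_{H^2}\le C\|u\|_H$ together with the interpolation $\|u\|_{H^1}\le\|u\|_H^{1/2}\|u\|_{H^2}^{1/2}$ to estimate the whole nonlinear term as $c_\nu\|u\|_H^6+\tfrac{\nu}{4}\|Au\|_H^2$. The right-hand side then depends only on $\|u\|_H$, which is already dissipatively controlled, so ordinary Gronwall gives \eqref{dis_H_s_u} immediately without the uniform-Gronwall bookkeeping.

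Your $H^2$ argument, however, has a genuine gap. Multiplying \eqref{main_mod} by $A^2u$ produces the forcing term $(f,A^2u)$, and with only $f\in H$ this cannot be absorbed: the dissipation yields $\nu\|u\|_{H^3}^2$, while $(f,A^2u)=(A^{1/2}f,A^{3/2}u)$ requires $\|f\|_{H^1}$ to close. The paper explicitly flags this obstruction and takes a different route: it differentiates \eqref{main_mod} in $t$, sets $\phi=\partial_t u$, and derives a dissipative estimate for $\|\phi\|_H$ (the key being that the differentiated equation has \emph{no} forcing term). The equivalence $\|u\|_{H^2}\sim\|\partial_t u\|_H$ modulo lower-order quantities then transfers this back to $\|u\|_{H^2}$. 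The smoothing for $s=2$ is likewise obtained by multiplying the $\phi$-inequality by $t^2$, not the $A^2u$-inequality. So your overall strategy for $s=2$ needs to be replaced by this time-derivative trick.
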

\begin{proof}
\textbf{$\bf{H^1}$-estimate on $\bf{u}$.}

Let us take the inner product of \eqref{main_mod} with $A u$. Then we obtain
\begin{equation} \label{eq_u_H^1}
\frac{1}{2}\frac{d}{dt} \|u\|^2_{H^1} + \nu \|Au\|^2_H +(B(u,\bar{u}), Au) = (f,Au).
\end{equation}
By the Cauchy-Schwarz and Young's inequalities, we have
\begin{equation}\label{f_est}
|(f,Au)| \le \|f\|_H \|Au\|_H \le c_{\nu}\|f\|^2_H + \frac{\nu}{4}\|Au\|^2_H.
\end{equation}
%It remains to estimate the term $(B(u,\bar{u}), Au)$. We use that the bilinear form $B$ satisfies the following inequality in 3D case:
%\begin{equation}\label{est_B}
%|(B(u_1,u_2),u_3)|\le c \|u_1\|_{H^1}\|u_2\|_{H^1}^{\frac{1}{2}}\|Au_2\|^{\frac{1}{2}}_H \|u_3\|_H,
%\end{equation}
%for every $u_1 \in V$, $u_2 \in D(A)$ and $u_3 \in H$.
Then we use estimates \eqref{est_B} and \eqref{bar_u_u} together with interpolation inequality to deduce
\begin{multline}
|(B(u,\bar{u}), Au)| \le c \|u\|_{H^1} \|\bar{u}\|^{1/2}_{H^1} \|A \bar{u}\|^{1/2}_H \|Au\|_H \le \\
c_1\|u\|_{H}^{1/2}\|Au\|_H^{1/2}\|u\|_H^{1/2}\|\bar u\|^{1/2}_{H^{1}}\|Au\|_{H}\le
 c_2 \|u\|^{3/2}_{H} \|Au\|^{3/2}_H \le  c_{\nu}  \|u\|^6_{H} + \frac{\nu}{4}\|Au\|^2_H.
%\le \tilde{c}_{\nu}\|u\|^2_{H^1} \|\bar{u}\|_{H^1} \|A\bar{u}\|_H + \frac{\nu}{4}\|Au\|^2_H \\
%\le Q (\|u(0)\|_H)
%\|u\|^2_{H^1} + \frac{\nu}{4}\|Au\|^2_H.
\end{multline}
Consequently, equation \eqref{eq_u_H^1} transforms into
\begin{equation}\label{growing}
\frac{d}{dt}\|u\|^2_{H^1} + \nu \|u\|^2_{H^1} \le 2 c_{\nu}  \|u\|^6_{H} + 2 c_{\nu}\|f\|^2_H,
\end{equation}
%where $\bar{c}_{u_0} := \tilde{C} e^{-\gamma t}\|{u}(0)\|^2_{H} + \tilde{K} \le \tilde{C}\|{u}(0)\|^2_{H} + \tilde{K} = C_{u_0} $.
Finally, applying  Gronwall's inequality and using estimate \eqref{dis_bu} we get
\begin{equation}\label{dis_qrow}
\|u(t)\|^2_{H^1} \le e^{-\nu t}Q_1( \|u(0)\|_{H^1}) +Q(\|f\|_H),
\end{equation}
which is the desired estimate \eqref{dis_H_s_u}. It remains to prove only the smoothing property \eqref{smooth_H_s_u}. Let us notice that integrating  \eqref{sm2_H} from $0$ to $t$ and using \eqref{dis_bu}, we have
%$$
\begin{equation}\label{L_2_H_1}
\int_0^t\|u(\tau)\|^2_{H^1}d\tau \le Q(\|u(0)\|_H) +tQ(\|f\|_H).
\end{equation}
%$$
Thus multiplying equation \eqref{growing} by $t$, integrating over the interval $(0,t)$, for $t\le 1$, and using \eqref{L_2_H_1} we come to
%$$
\begin{equation}\label{interm}
t \|u(t)\|^2_{H^1}\le \tilde{Q}(\|u(0)\|_H) +t\tilde{Q}(\|f\|_H),
\end{equation}
%$$
which is the desired estimate \eqref{smooth_H_s_u} for $s=1$.

\textbf{$\bf{H^2}$-estimate on $\bf{u}$.}

Let us notice that since $f \not \in H^1$  the straightforward multiplication of equation \eqref{main_mod} on $A^2 u$ does not help to deduce the dissipative estimate for the $H^2$-norm. Instead of this we will obtain the dissipative estimate on $\|\pt u(t)\|_H$ which is in fact equivalent to the $\|u(t)\|_{H^2}$. Indeed, estimating the term $\nu Au$ from equation \eqref{main_mod}, we get
%$$
\begin{equation}
\nu\|Au(t)\|_H\le\|\Dt u(t)\|_H+\|B(u(t),\bar u(t))\|_H+\|f\|_H,
\end{equation}
%$$
so we only need to estimate the $H$-norm of the non-linearity $B(u,\bar u)$ as follows
%$$
\begin{equation}\label{3.B_u_bu}
\|B(u(t),\bar u(t))\|_H\le C\|u(t)\|_{L^2}\|\nabla\bar u(t)\|_{L^\infty}\le C_1\|u(t)\|_H\|u(t)\|_{H^1}\le C_1\|u(t)\|_{H^1}^2.
\end{equation}
%$$
Therefore,
%$$
\begin{equation}\label{pt_u_u_2_1 }
\nu \|Au(t)\|_H \le \|f\|_H + \|\pt u(t)\|_H + C\|u(t)\|_{H^1}^2.
\end{equation}
%$$
On the other hand,
%$$
\begin{equation}\label{pt_u_u_2_2}
\nu \|A u(t)\|_H \ge\|\pt u(t)\|_H  - \|f + B(u(t), \bar u(t))\|_H \ge\| \pt u(t)\|_H -\|f\|_H-C\|u(t)\|^2_{H^1}
\end{equation}
%$$
and keeping in mind already proved estimate \eqref{dis_H_s_u} with $s=1$, we see that the dissipative estimates for $u(t)$ in $H^2$ and for $\|\pt u(t)\|_H$ are equivalent.
\par
To obtain the desired estimate for $\pt u$, we differentiate equation \eqref{main_mod} with respect to $t$ and denote $\phi= \pt u$. This gives
%$$
\begin{equation}\label{dif_main_mod}
\pt \phi + \nu A \phi + B(\phi, \bar{u}) + B(u,\bar{\phi}) = 0.
\end{equation}
%$$
Taking the inner product with $\phi$, we obtain
\begin{equation} \label{dif_eq_H}
\frac{1}{2}\frac{d}{dt}\|\phi\|^2_H + \nu\|\phi\|^2_{H^1} + (B(\phi, \bar{u}), \phi)+ (B(u,\bar{\phi}),\phi)=0.
\end{equation}
To estimate the last two terms we use property \eqref{est_B} of $B$, Holder's inequality, interpolation inequality
\begin{equation}\label{interpol}
 \|\phi\|^2_H \le \|\phi\|_{H^{-1}} \|\phi\|_{H^1}
\end{equation}
%$$
and the Young inequality. Then,
we get
%$$
\begin{multline}\label{ap_B_1}
|(B(\phi, \bar{u}), \phi)| \le c \|\phi\|_{H^1}\|\bar{u}\|^{1/2}_{H^1}\|A\bar{u}\|^{1/2}_{H}\|\phi\|_H \le\\\le \|\phi\|^{3/2}_{H^1}\|\bar u\|_{H^1}^{1/2}\|u\|_H^{1/2}\|\phi\|^{1/2}_{H^{-1}}\le \frac{\nu}{4}\|\phi\|^2_{H^1} + C_{\nu}\|\bar{u}\|^2_{H^1} \|u\|^2_H \|\phi\|^2_{H^{-1}}.
\end{multline}
%$$
Analogously,
%$$
\begin{multline}\label{ap_B_2}
|(B(u,\bar{\phi}),\phi)| \le c \|u\|_{H^1}\|\bar{\phi}\|^{1/2}_{H^1}\|\bar{\phi}\|^{1/2}_{H^2} \|\phi\|_{H}\le\\\le C\|u\|_{H^1}\|\phi\|_{H^{-1}}^{1/2}\|\phi\|_H^{3/2}\le C_1\|u\|_{H^1}\|\phi\|^{5/4}_{H^{-1}}\|\phi\|_{H^1}^{3/4} \le \frac{\nu}{4}\|\phi\|^2_{H^1} + C_{\nu} \|u\|^{8/5}_{H^1}\|\phi\|^2_{H^{-1}}.
\end{multline}
%$$
Estimating $\phi=\pt u$ from equation \eqref{main_mod} and using \eqref{3.B_u_bu}, we get
%$$
\begin{equation}\label{phi}
\|\phi(t)\|_{H^{-1}} \le \|f\|_{H} + \nu\|u(t)\|_{H^1} + \|B(u(t),\bar{u}(t))\|_{H}\le C\|u(t)\|_{H^1}(1+\|u(t)\|_{H^1})+\|f\|_H.
\end{equation}
%$$
Finally, \eqref{ap_B_1} and \eqref{ap_B_2} can be estimated as
%$$
\begin{equation} \label{B+B}
|(B(\phi, \bar{u}), \phi)| + |(B(u,\bar{\phi}),\phi)| \le \frac{\nu}{2}\|\phi\|^2_{H^1}+Q(\|u(t)\|_{H^1})+Q(\|f\|_H),
\end{equation}
%$$
for some monotone function $Q$.
After substitution of \eqref{B+B}  equation \eqref{dif_eq_H} turns into
%$$
\begin{equation} \label{H_2_grow}
\frac{d}{dt}\|\phi\|^2_H + \nu \|\phi\|^2_{H^1} \le Q(\|u(t)\|_{H^1})+Q(\|f\|_H).
\end{equation}
%$$
Applying Gronwall's inequality,  estimating the right-hand side by \eqref{dis_H_s_u} with $s=1$ and expressing the value $\pt u(0)$ using  \eqref{pt_u_u_2_2}, we get
%$$
\begin{equation}\label{grow_v}
\|\pt u(t)\|_{H} \le  Q_1(\|u(0)\|_{H^2})e^{-\gamma t}+Q_1(\|f\|_H),
\end{equation}
%$$
where $\gamma>0$ and $Q_1$ is some monotone function. Together with \eqref{pt_u_u_2_1 } this finishes the proof of the dissipative estimate for the $H^2$-norm of $u(t)$.
To obtain smoothing property \eqref{smooth_H_s_u} we multiply inequality \eqref{H_2_grow} by $t^2$ and integrate over the interval $(0,t)$.
%$$
\begin{equation}
t^2 \|\phi\|^2_H + \nu \int_0^t \tau^2 \|\phi\|^2_{H^1}d\tau \le \int_0^t 2 \tau \|\phi\|^2_H d\tau+Q(\|u(0)\|_{H^1})+Q(\|f\|_H).
\end{equation}
%$$
Due to interpolation inequality \eqref{interpol} and estimate \eqref{phi}, we have
%$$
\begin{multline}
t^2 \|\phi\|^2_H + \nu \int_0^t \tau^2 \|\phi\|^2_{H^1}d\tau \le  \nu \int_0^t  \tau^2 \|\phi\|^2_{H^1} d\tau + C_{\nu}\int_0^t \|\phi\|^2_{H^{-1}} d\tau+\\+Q(\|u(0)\|_{H^1})+Q(\|f\|_H) \le   \nu\int_0^t  \tau^2 \|\phi\|^2_{H^1} d\tau  + Q_1(\|u(0)\|_{H^1}) + Q_1(\|f\|_H).
\end{multline}
%$$
Finally, thanks to estimate \eqref{pt_u_u_2_1 } we change $\|\phi(t)\|_H$ on $\|u(t)\|_{H^2}$ and obtain smoothing property \eqref{smooth_H_s_u}. The theorem is proved.
\end{proof}
Let us consider equation on $w:=u - v$, where $u$ solves problem \eqref{main_eq} and $v$ solves stationary problem \eqref{stat_pr} (actually, it may be many solutions of this problem, then we just fix any of them). After applying Helmholtz-Leray projection we get
%$$
\begin{equation}\label{main_eq_w}
 \begin{cases}
  \pt w + \nu A w + B(w,\bar{w}) + B(v, \bar{w}) + B(w,\bar{v})= 0,\\
  w =\bar{w} + \alpha^2 A \bar{w},\\
  w(0) = w_0.
 \end{cases}
\end{equation}
%$$
In the sequel, we will study the solutions $w$ and the corresponding attractor of the equation \eqref{main_eq_w} instead of solutions $u$ of equation \eqref{main_eq}. The advantage of this approach is that the solutions of \eqref{main_eq_w} are more regular. Indeed, the regularity of the solution $u(t)$ is restricted by the regularity of the external forces $f$ and we cannot expect more regularity than $H^2$ from the solution $u$ if the external force $f\in H$ only. In contrast to this, as we see from the next theorem, the solution $w(t)$ will be at least $H^3$ even if $f\in H$ only.

%Now we are ready to formulate the main theorem of this section %concerning the $H^3$-regularity of solutions.
\begin{theorem}
Let $w(t)$ be a solution of equation \eqref{main_eq_w} with $w(0)\in H$. Then $w(t)\in H^3$ for all $t\ge0$ and the following dissipative estimate holds:
%$$
\begin{equation}\label{dis_est_H_3}
\|w(t)\|_{H^3} \le Q(\|w(0)\|_{H^3})e ^{-\gamma t} + \tilde{Q}(\|f\|_H),
\end{equation}
%$$
where the monotone function $Q$ and positive constant $\gamma$ depend on $\alpha$ and $\nu$, and monotone function $\tilde{Q}$ depends on $\nu$. Moreover, the following smoothing property is valid:
%$$
\begin{equation}\label{smooth_H_3}
\|w(t)\|_{H^3} \le (1+t^{-N})Q(\|w(0)\|_{H}),\ \ t>0,
\end{equation}
%$$
where as before monotone function $Q$ depends on $\alpha$ and $\nu$.
\end{theorem}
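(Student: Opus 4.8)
The essential new feature, compared with the equation for $u$, is that \eqref{main_eq_w} carries no external force: since $v$ solves the stationary problem \eqref{stat_pr} with right-hand side $f$, the difference $w=u-v$ obeys a \emph{homogeneous} equation. Consequently the regularity of $w$ is no longer capped at $H^2$ by the mere $H$-regularity of $f$; it is limited only by the smoothness of the filtered fields $\bar w,\bar v$, which are two derivatives smoother than $w,v$. Indeed, from $w=(1+\alpha^2A)\bar w$ one reads off in Fourier variables that $\|\bar w\|_{H^{s+2}}\le\alpha^{-2}\|w\|_{H^s}$, and similarly for $\bar v$. Combined with $v\in H^2$ (from estimate \eqref{3.disV}) and the dissipative $H^2$-bound for $u$ in \eqref{dis_H_s_u}, this already gives $w\in H^2$ and $\bar w,\bar v\in H^4$ with dissipative control of all these norms. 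This two-order gain is the mechanism I would exploit to break the $H^2$ barrier.

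To obtain the dissipative estimate \eqref{dis_est_H_3}, I would first assume $w(0)\in H^3$ and run a formal energy estimate (justified via Galerkin) by taking the inner product of \eqref{main_eq_w} with $A^3w$. The linear part yields $\tfrac12\tfrac{d}{dt}\|w\|^2_{H^3}+\nu\|w\|^2_{H^4}$. For each bilinear term I would move one factor of $A$ onto the test function by self-adjointness, e.g. $(B(w,\bar w),A^3w)=(AB(w,\bar w),A^2w)$, so that the worst test factor is only $\|A^2w\|_H=\|w\|_{H^4}$. Since $H^2(\mathbb T^3)$ is a Banach algebra, the three nonlinearities are controlled by $\|B(w,\bar w)\|_{H^2}\le C\|w\|_{H^2}\|\bar w\|_{H^3}\le C\alpha^{-2}\|w\|^2_{H^2}$, together with $\|B(v,\bar w)\|_{H^2}\le C\|v\|_{H^2}\|w\|_{H^1}$ and $\|B(w,\bar v)\|_{H^2}\le C\|w\|_{H^2}\|v\|_{H^1}$. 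Absorbing the factors $\|w\|_{H^4}$ into the dissipation by Young's inequality, and using the Poincar\'e inequality $\|w\|_{H^4}\ge\|w\|_{H^3}$, leaves a differential inequality of the form $\tfrac{d}{dt}\|w\|^2_{H^3}+\nu\|w\|^2_{H^3}\le C_\nu(\|w\|^4_{H^2}+\|v\|^2_{H^2}\|w\|^2_{H^2})$. Gronwall's inequality, with the right-hand side controlled by the already-established dissipative $H^2$-bound for $u$ and the fixed $H^2$-bound for $v$, then yields \eqref{dis_est_H_3}.

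For the smoothing property \eqref{smooth_H_3} with merely $w(0)\in H$, I would bootstrap through the scale $H\to H^1\to H^2\to H^3$ using time weights. The smoothing estimate \eqref{smooth_H_s_u} for $u$ already gives dissipative control of $\|w(t)\|_{H^2}$ for $t>0$ with an integrable singularity at $t=0$; multiplying the $H^3$ differential inequality above by a suitable power $t^{k}$, integrating over $(0,t)$, and estimating the resulting lower-order integral terms by these weighted $H^2$-bounds produces \eqref{smooth_H_3}, the exponent $N$ accounting for the accumulated powers of $t$ through the bootstrap. In particular this shows $w(t)\in H^3$ for every $t>0$ (the instantaneous regularization), while the claim for $t=0$ is understood for $H^3$-data.

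The main obstacle, and the crux of the argument, is the control of the nonlinear terms in the $H^3$-estimate: naively pairing $B(\cdot,\cdot)$ with $A^3w$ would demand far too much regularity of $w$. The resolution is the combination of (i) integrating by parts to lower the test factor to $\|w\|_{H^4}$, which is precisely the available dissipation, and (ii) exploiting that each bilinear term contains one filtered argument $\bar w$ or $\bar v$ lying in $H^4$, so that the $H^2$-algebra estimate closes using only the already known $H^2$-norms of $w$ and $v$. Verifying that this genuinely closes — that no term requires more than $\|w\|_{H^2}$, $\|v\|_{H^2}$, and half a power of the dissipation — is the delicate bookkeeping step; the remainder is routine Gronwall and Galerkin justification.
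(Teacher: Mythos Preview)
Your proposal is correct and follows essentially the same route as the paper: test \eqref{main_eq_w} against $A^3w$, shift one factor of $A$ onto the nonlinear terms via self-adjointness so that the $H^2$-algebra property controls $\|B(\cdot,\cdot)\|_{H^2}$ by already known $H^1$/$H^2$ norms of $w$ and $v$, absorb the $\|w\|_{H^4}$ by Young's inequality into the dissipation, and apply Gronwall (with time weights for the smoothing). The paper's differential inequality is $\tfrac{d}{dt}\|w\|^2_{H^3}+\nu\|w\|^2_{H^4}\le C\|w\|^2_{H^2}(\|w\|^2_{H^1}+\|v\|^2_{H^2})$, which is marginally sharper than your $\|w\|^4_{H^2}$ on the right, but this is a cosmetic difference and does not affect the argument.
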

\begin{proof}
 Let us take the inner product of equation \eqref{main_eq_w} with $A^3 w$. Then, we get
%$$
\begin{equation}\label{est_H_3}
\frac{1}{2}\frac{d}{dt} \|w\|^2_{H^3}+\nu \|w\|^2_{H^4}+(B(w,\bar{w}),A^3 w)+(B(w,\bar{v}),A^3 w)+(B(v,\bar{w}), A^3 w)=0.
\end{equation}
%$$
Using H\"older's and Young's inequalities and the fact that $H^2(\mathbb T^3)$ is an algebra, we obtain
\begin{multline}\label{B_H_3_a}
|(B(w,\bar{w}),A^3 w)| = |(AB(w,\bar{w}),A^2 w)| \le \frac{1}{\nu}\|B(w,\bar{w})\|^2_{H^2} + \frac{\nu}{4} \|w\|^2_{H^4}\\ \le C\|w\|^2_{H^2}\|w\|^2_{H^1}+ \frac{\nu}{4} \|w\|^2_{H^4},
\end{multline}
%$$
where constant $C$ depends on $\nu$ and $\alpha$. Similarly,
%$$
\begin{equation}\label{B_H_3_b}
|(B(v,\bar{w}), A^3 w)| + |(B(w,\bar{v}), A^3 w)| \le C \|v\|^2_{H^2} \|w\|^2_{H^2}+ \frac{\nu}{4} \|w\|^2_{H^4},
\end{equation}
%$$
where constant $C$ depends on $\nu$ and $\alpha$.
\par
Substituting \eqref{B_H_3_a} and \eqref{B_H_3_b} into \eqref{est_H_3} we obtain
\begin{equation} \label{fin_H_3}
\frac{d}{dt} \|w\|^2_{H^3} + \nu \|w\|^2_{H^4} \le C\|w\|^2_{H^2}(\|w\|^2_{H^1} + \|v\|^2_{H^2}).
\end{equation}
Estimate \eqref{fin_H_3} is similar to \eqref{growing} and \eqref{H_2_grow} so arguing exactly as in the cases of $H^1$- and $H^2$-norms for the equation \eqref{main_mod} we get the desired estimates \eqref{dis_est_H_3} and \eqref{smooth_H_3} for $s=3$. Theorem is proved.
\end{proof}
Our next step is to study the global attractor for the solution semigroup $S(t)$ associated with equation \eqref{main_eq_w}. Let us recall the definition of a global attractor.
\begin{definition}
A set $\mathcal{A}$ is a global attractor for the semigroup $S(t)$ if the following properties are satisfied:

1. $\mathcal{A}$ is compact in $H$;

2. $\mathcal{A}$ is strictly invariant: $S(t)\mathcal{A} = \mathcal{A}$ for all $t \ge 0$;

3. $\mathcal{A}$ is an attracting set for the semigroup $S(t)$, i.e. for every bounded set $B \subset H$ and every neighbourhood $\mathcal{O}(\mathcal{A})$ there exists time $T = T(B)$ such that $S(t)B \subset \mathcal{O}(\mathcal{A})$ for all $t\ge T$.
\end{definition}
Summarizing results of this section we obtain.
\begin{theorem}\label{attractor}
Let $S(t): H \to H$ be the solution semigroup generated by equation \eqref{main_eq_w}. Then this semigroup possesses a global attractor $\mathcal{A}$ in the phase space $H$, which is a bounded set in $H^3$.
\end{theorem}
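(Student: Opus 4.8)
The plan is to invoke the standard abstract theorem on the existence of global attractors (see, e.g., \cite{Tem}): a semigroup $S(t)$ acting on $H$ that is continuous on $H$ for each fixed $t\ge0$, possesses a bounded absorbing set, and is asymptotically compact, admits a global attractor $\mathcal{A}$ which coincides with the omega-limit set of the absorbing set. Thus I would verify these three hypotheses one by one, drawing entirely on the dissipative and smoothing estimates already established for equation \eqref{main_eq_w} in this section.

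First, the existence of a bounded absorbing set is immediate from the dissipative estimate \eqref{dis_est_H_3}. Indeed, setting $R^2:=2\tilde{Q}(\|f\|_H)$ and taking the ball $\mathcal{B}:=\{w\in H^3:\ \|w\|_{H^3}\le R\}$, estimate \eqref{dis_est_H_3} shows that for every bounded set $B\subset H$ (contained in some $H^3$-ball of radius $R_0$) there is a time $T=T(R_0)$ after which $S(t)B\subset\mathcal{B}$ for $t\ge T$. Hence $\mathcal{B}$ is a bounded absorbing set, and crucially it is bounded not merely in $H$ but in $H^3$, which is exactly what is needed for the last assertion of the theorem.

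Second, I would obtain compactness of the semigroup directly from the smoothing property \eqref{smooth_H_3}: for any fixed $t>0$ the operator $S(t)$ maps every bounded subset of $H$ into a bounded subset of $H^3$. Since the Sobolev embedding $H^3\hookrightarrow H$ is compact on the torus (Rellich--Kondrachov), $S(t)B$ is then relatively compact in $H$ for every $t>0$ and every bounded $B$, so $S(t)$ is a compact semigroup; in particular it is asymptotically compact, which sidesteps any need for an energy-equality argument. Together with the continuity of $S(t)$ on $H$ (which follows from the well-posedness and continuous dependence on the data underlying Theorem \ref{Il_Ti}), the abstract theorem then yields the global attractor $\mathcal{A}=\omega(\mathcal{B})$.

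Finally, to see that $\mathcal{A}$ is bounded in $H^3$, I would use that $\mathcal{A}\subset\overline{\mathcal{B}}^{\,H}$, the closure being taken in the topology of $H$. Because the $H^3$-norm is weakly lower semicontinuous and the embedding $H^3\hookrightarrow H$ is compact, any $H$-limit of a sequence bounded by $R$ in $H^3$ again satisfies $\|\cdot\|_{H^3}\le R$; hence $\overline{\mathcal{B}}^{\,H}$ remains contained in the $H^3$-ball of radius $R$, and therefore so does $\mathcal{A}$. The only genuinely delicate point in this scheme is the continuity of the solution map on $H$ — all the dissipativity and compactness come essentially for free from the estimates of this section — so I expect the verification of continuous dependence on initial data (equivalently, the closedness of the graph of $S(t)$) to be the step requiring the most care.
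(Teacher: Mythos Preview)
Your approach matches the paper's: produce a compact absorbing set in $H$ from the $H^3$-estimates, verify continuity of $S(t)$ in $H$, and invoke the abstract attractor theorem. There is, however, one genuine slip. You write that a bounded set $B\subset H$ is ``contained in some $H^3$-ball of radius $R_0$'', which is false in general; and estimate \eqref{dis_est_H_3} alone cannot be applied to $H$-bounded data since its right-hand side involves $\|w(0)\|_{H^3}$. The correct order (which is what the paper does when it cites \emph{both} \eqref{dis_est_H_3} and \eqref{smooth_H_3} for the absorbing ball) is: first apply the smoothing property \eqref{smooth_H_3} at a fixed time $t_0>0$ to send an $H$-bounded set into an $H^3$-bounded set, and only then apply the dissipative estimate \eqref{dis_est_H_3} from time $t_0$ onward to land in $\mathcal B$.

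On continuity: the paper does not defer this to Theorem \ref{Il_Ti} but proves it directly by an energy estimate on the difference of two solutions, using H\"older with exponents $2,3,6$, the embedding $H^1\subset L^6$, and Gronwall. Your instinct that this is the step requiring the most care is correct. Your closure argument for $\mathcal A\subset H^3$ via weak lower semicontinuity is fine (and slightly more careful than the paper's one-line $\mathcal A\subset\mathcal B\subset H^3$).
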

\begin{proof}
Indeed, from the global attractor's theory (see, e.g., \cites{BV, Tem1}) we know, that it is sufficient to prove  that

1. semigroup $S(t)$ is continuous in H for every fixed $t$;

2. $S(t)$ possesses a compact absorbing set in $H$.

From the dissipative estimate \eqref{dis_est_H_3} and smoothing property \eqref{smooth_H_3} it follows that the ball
\begin{equation*}
\mathcal B:=\{ w \in H, \|w\|^2_{H^3} \le 2 \tilde{Q}(\|f\|_H)\}
\end{equation*}
in $H^3$ is an absorbing set in $H$. Thus, the existence of a compact absorbing set is verified.
To prove the continuity of $S(t)$ in $H$ for every fixed $t$ we consider two solutions $u_1$ and $u_2$  of equation \eqref{main_mod} with initial values $u_1(0)= u_1^0$ and $u_2(0)=u_2^0$, and $v$ a solution of the stationary problem \eqref{stat_pr}. Then subtracting from the equation for $u_1-v$ the equation for $u_2-v$ and denoting $w:= u_1- u_2$ we get
%$$
\begin{equation}\label{w_diff}
\pt w + \nu A w + B(w,\bar{u}_1) + B(u_2,\bar{w})=0.
\end{equation}
%$$
By taking the inner product of \eqref{w_diff} with $w$,
%$$
\begin{equation}
\frac{1}{2}\frac{d}{dt}\|w\|^2_H + \nu \|w\|^2_{H^1} + (B(w,\bar{u}_1),w) + (B(u_2,\bar{w}),w)=0.
\end{equation}
%$$
We estimate the non-linear terms using the H\"older inequality with exponents $2$, $3$ and $6$ and the embedding $H^1\subset L^6$. This gives
%$$
\begin{multline}
|(B(w,\bar u_1),w)|\le C\|w\|_{L^2}\|\nabla \bar u_1\|_{L^3}\|w\|_{L^6}\le\\\le C_1\|w\|_{H}\|\bar u_1\|_{H^2}\|w\|_{H^1}\le \frac\nu2\|w\|_{H^1}^2+C_\nu\|u_1\|^2_H\|w\|^2_H
\end{multline}
%$$
and, analogously,
%$$
\begin{equation}
|(B(u_2,\bar w),w)|\le\frac\nu2\|w\|^2_{H^1}+C_\nu\|u_2\|^2_H\|w\|^2_H.
\end{equation}
%$$
Thus, inserting these estimates into \eqref{w_diff}, we obtain
%$$
\begin{equation}
\frac{d}{dt}\|w\|^2_H \le C_\nu \left(\|u_1\|^2_{H} +\|u_2\|^2_{H}\right) \|w\|^2_H.
\end{equation}
%$$
Using Gronwall's inequality and the fact that the $H$-norms of $u_i(t)$ are under control, we finally arrive at
\begin{equation}
\|w(t)\|^2_H \le \|w(0)\|^2_H e^{C \int_0^t \left(\|u_1(s)\|^2_{H}+\|u_2(s)\|^2_H\)\,ds}\le e^{C_1t}\|w(0)\|_{H^2}^2.
\end{equation}
Thus, operator $S(t)$ is continuous in $H$, which together with existence of an absorbing set gives the existence of a global attractor $\mathcal{A}\subset B\subset H^3$.   The theorem is proved.
\end{proof}

\section{Spatial averaging: an abstract scheme}

In this section, we briefly describe the construction of an inertial manifold based on the spatial averaging method in the particular case of zero averaging which will be applied below to the modified Leray-$\alpha$ problem. In this particular case, we suggest an alternative simplified proof of the existence of the inertial manifold which follows the Perron  method and is based on the Banach contraction theorem applied in the properly chosen weighted space of trajectories.  
\par
We consider the following abstract parabolic equation in a Hilbert space $H$:
%$$
\begin{equation}\label{ur}
\Dt u+Au=F(u),\ \ u\big|_{t=0}=u_0,
\end{equation}  
%$$
where $A$ is a positive definite self-adjoint linear operator with the compact inverse and $F:H\to H$ is the non-linear operator which is assumed to be globally Lipschitz continuous with the Lipschitz constant $L$. Let also $\lambda_n$, $n\in\mathbb N$, be the eigenvalues of the operator $A$ enumerated in the non-decreasing order and $e_n$ be the corresponding eigenvectors. For any fixed $N,k\in\mathbb N$ such that $k<\lambda_N$, we define the orthoprojector to the intermediate modes:
%$$
\begin{equation}\label{ukr}
R_{N,k}u:=\sum_{\lambda_N-k<\lambda_n<\lambda_{N+1}+k}(u,e_n)e_n,\ \ u:=\sum_{n=1}^\infty(u,e_n)e_n.
\end{equation}
%$$  
Finally, we assume that the nonlinearity $F$ is Gateaux differentiable and its derivative $F'(u)\in\mathcal L(H,H)$ satisfies the spatial averaging principle in the following  form: there exists $\rho>0$ such that for every $k\in\mathbb N$ and any $\delta>0$ there are infinitely many numbers $N$ such that $\lambda_{N+1}-\lambda_N\ge\rho$ and 
%$$
\begin{equation}\label{av}
\|R_{N,k}\circ F'(u)\circ R_{N,k}v\|_H\le\delta\|v\|_H,\ \ u,v\in H.
\end{equation}
%$$
This is  a simplified form of the general averaging principle which corresponds to the case where the spatial averaging of the derivative
 $F$ is identically zero, \cite{mal-par,Zel}. Since in the sequel we need only this particular case for the applications to the Navier-Stokes problem, we will not consider the general case here.
\par
The next theorem can be considered as the main result of the section.

\begin{theorem}\label{mau} Let the above assumptions hold and let the numbers $N$, $k$, $\rho$ and $\delta$ involved into the condition \eqref{av} satisfy the inequality
%$$
\begin{equation}\label{unknown}
 \frac{L^2}{k^2}+2\frac{\delta^2}{\rho^2}+2\sqrt{2}\frac{L^2}{k\rho}<1.
\end{equation}
%$$
Then, equation \eqref{ur} possesses an $N$-dimensional inertial manifold in the sense of Definition \ref{defman}
\end{theorem}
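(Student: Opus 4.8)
The plan is to realise $\mathcal M$ as the graph of a map $\Phi:\mathcal P_N H\to\mathcal Q_N H$ whose value $\Phi(u_+)$ is read off from the unique complete trajectory of \eqref{ur} that starts with $\mathcal P_N u(0)=u_+$ and stays, backward in time, inside a suitable weighted trajectory space. Concretely, I would fix a cut-off exponent $\gamma$ strictly between $\lambda_N$ and $\lambda_{N+1}$ (the midpoint, so that $\gamma-\lambda_N\ge\rho/2$ and $\lambda_{N+1}-\gamma\ge\rho/2$), work in the space of $H$-valued functions on $(-\infty,0]$ (or on $\R$) with finite norm $\int e^{2\gamma t}\|u(t)\|_H^2\,dt$, and rewrite the equation through the Perron/Green operator $G=(\Dt+A)^{-1}$ taken causally from $0$ on the low modes and anticausally on the high modes. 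After the substitution that removes the weight, $G$ acts in the eigenbasis of $A$ as the time--Fourier multiplier $(i\xi+\lambda_n-\gamma)^{-1}$, so on the $n$-th mode its norm is bounded by $1/|\lambda_n-\gamma|$. The inertial manifold and its Lipschitz constant will then follow from a single contraction/uniqueness estimate for the linearised difference equation.

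For two trajectories $u_1,u_2$ I would set $w=u_1-u_2$ and write $F(u_1)-F(u_2)=\ell(t)w$ with $\ell(t)=\int_0^1 F'(\theta u_1+(1-\theta)u_2)\,d\theta$, which inherits $\|\ell(t)\|\le L$ and, by averaging the hypothesis \eqref{av} over the segment, $\|R_{N,k}\ell(t)R_{N,k}v\|_H\le\delta\|v\|_H$. The whole argument then reduces to estimating the operator $w\mapsto G\ell w$. I would split $H$ orthogonally into the \emph{far} modes $(\mathrm{Id}-R_{N,k})H$, on which $|\lambda_n-\gamma|\ge k$ so $G$ gains a factor $1/k$, and the \emph{near} modes $R_{N,k}H$, on which $|\lambda_n-\gamma|\ge\rho/2$ so $G$ gains only $2/\rho$. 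The far part of the output is harmless: $\|(\mathrm{Id}-R_{N,k})G\ell w\|\le k^{-1}\|(\mathrm{Id}-R_{N,k})\ell w\|\le (L/k)\|w\|$, which produces the term $L^2/k^2$. On the near part one writes $R_{N,k}\ell w=R_{N,k}\ell R_{N,k}w+R_{N,k}\ell(\mathrm{Id}-R_{N,k})w$, where the first summand is small by averaging (contributing of order $\delta/\rho$) and the second is the genuinely dangerous term.

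The main obstacle is exactly this second term: on the near-gap modes there is \emph{no} spectral gap large compared with $L$, and a naive bound gives order $L/\rho$, which is not below $1$. This is the point the whole spatial-averaging scheme is designed to overcome. The resolution is that on an actual bounded solution the far component is self-consistent, $(\mathrm{Id}-R_{N,k})w=(\mathrm{Id}-R_{N,k})G\ell w$, hence already small of order $L/k$; feeding this into $R_{N,k}\ell(\mathrm{Id}-R_{N,k})w$ and applying the near-mode gain $1/\rho$ turns the dangerous contribution into one of order $L^2/(k\rho)$ rather than $L/\rho$. Because a one-shot contraction on arbitrary inputs cannot control the input's far part, I would close the estimate either on genuine solutions (uniqueness of the complete bounded trajectory) or, equivalently, by noting that one application of the map already renders the far component small and iterating, so that the relevant power of the map is a contraction. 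Collecting the three contributions by Pythagoras and balancing the cross term with Young's inequality should yield precisely the quadratic condition $L^2/k^2+2\delta^2/\rho^2+2\sqrt2\,L^2/(k\rho)<1$, the constants $2$ and $2\sqrt2$ being the outcome of the optimal Young splitting.

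Once \eqref{unknown} guarantees the contraction, the standard consequences are routine: the fixed point defines, for each $u_+\in\mathcal P_N H$, a unique trajectory and hence $\Phi(u_+)$; the same estimate gives Lipschitz continuity of $\Phi$ and therefore the $N$-dimensional Lipschitz graph $\mathcal M$; invariance is immediate from the trajectory characterisation; and the exponential tracking (Definition \ref{defman}, item 3) follows from the dichotomy at the rate $\gamma$ in the usual way. Finally, the clause in \eqref{av} that there are infinitely many admissible $N$ with $\lambda_{N+1}-\lambda_N\ge\rho$ is what lets me pick an $N$ for which all of the above gains, and thus inequality \eqref{unknown}, actually hold.
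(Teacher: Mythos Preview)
Your plan is essentially the paper's: the Perron construction in the weighted trajectory space $L^2_\theta(\R,H)$ with $\theta=(\lambda_N+\lambda_{N+1})/2$, the mode-wise bound $\|v_n\|\le|\lambda_n-\theta|^{-1}\|h_n\|$ for the solution operator $\mathcal T$, and the near/far splitting via $R_{N,k}$ all match the paper exactly. Your identification of the ``dangerous term'' $R_{N,k}\ell(1-R_{N,k})w$ as the only obstruction, and of the mechanism (the far part of a solution is already of order $L/k$) is correct.

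The one genuine difference is how the dangerous term is tamed. You propose to exploit self-consistency on actual fixed points, or to iterate the map so that a power becomes contractive. The paper instead introduces an \emph{equivalent norm} on $H$,
\[
\|u\|_\eb^2:=\eb^2\|R_{N,k}u\|_H^2+\|(1-R_{N,k})u\|_H^2,
\]
and shows that $\mathcal T\circ\tilde F'_w$ is a \emph{one-step} contraction in $L^2_\theta(\R,H_\eb)$. The weight $\eb$ does algebraically what your feedback does dynamically: it converts ``the far part is automatically small after one application'' into a norm in which the map is contractive on arbitrary inputs. The condition \eqref{unknown} then drops out by minimising
\[
\frac{L^2}{k^2}\Bigl(1+\frac1{\eb^2}\Bigr)+2\,\frac{\delta^2+\eb^2L^2}{\rho^2}
\]
over $\eb>0$; this optimisation is exactly the ``Young balance'' you anticipated, and the factors $2$ and $2\sqrt2$ come from it. The weighted-norm route is cleaner because it gives existence, uniqueness, and the Lipschitz bound for $\Phi$ in one stroke via the Banach contraction theorem, whereas your uniqueness-on-solutions argument needs a separate existence step and your ``a power is a contraction'' suggestion would require additional bookkeeping. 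Otherwise the two arguments are equivalent in content.
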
   
\begin{proof} Following the Perron method, we will construct the desired inertial manifold by solving the backward in time problem
%$$
\begin{equation}\label{prob-}
\Dt u+Au=F(u),\ \ t\in\mathbb R_-, \ \mathcal P_Nu\big|_{t=0}=u_0^+
\end{equation} 
%$$
in the weighted trajectory space $L^2_\theta(\mathbb R_-,H)$ with the norm
$$
\|u\|_{L^2_\theta}^2:=\int_{-\infty}^0e^{2\theta t}\|u(t)\|^2_H\,dt
$$
with $\theta:=\frac{\lambda_N+\lambda_{N+1}}2$. Then the desired Lipschitz continuous function $\Phi: \mathcal P_NH\to \mathcal Q_NH$ is defined by the expression
$$
\Phi(u_0^+):=\mathcal Q_N u(0),
$$ 
where $u(t)$ is a unique solution of \eqref{prob-}, see \cite{Zel} for more details. To solve problem \eqref{prob-} we need to study the associated linear non-homogeneous problem on the whole line $t\in\mathbb R$:
%$$
\begin{equation}\label{lin}
\Dt v+Av=h(t),\ \ v,h\in L^2_\theta(\mathbb R,H).
\end{equation}
%$$
\begin{lemma}\label{lem} Let $\lambda_{N+1}>\lambda_N$. Then, for any $h\in L^2_\theta(\R,H)$, there is a unique solution $v\in L^2_\theta(\R,H)$ and the following estimate hold:
%$$
\begin{equation}\label{useless}
\|v\|_{L^2_\theta}\le\frac{2}{\lambda_{N+1}-\lambda_N}\|h\|_{L^2_\theta}.
\end{equation}
%$$
\end{lemma}
\begin {proof}[Proof of the lemma]
The proof of this lemma follows from the estimate for the Fourier modes $v_n$ of the solution $v$:
%$$
\begin{equation}\label{lin-n}
\|v_n\|_{L^2_\theta(\R)}^2\le \frac1{(\lambda_n-\theta)^2}\|h_n\|^2_{L^2_\theta(R)}
\end{equation}
%$$
which in turn can be easily verified using the explicit formula for the solution $v_n$ of the following ODE
$$
v_n''(t)+\lambda_nv_n(t)=h_n(t).  
$$
Then, using the obvious fact that the maximal value of the factor $\frac{1}{(\lambda_n-\theta)^2}$ is achieved at $n=N$ and at $n=N+1$, we have
%$$
\begin{multline}
\|h\|^2_{L^2_\theta(R,H)}=\sum_{n=1}^\infty\|h_n\|^2_{L^2_\theta(\R)}\le\sum_{n=1}^\infty\frac1{(\lambda_n-\theta)^2}\|h_n\|_{L^2_\theta(\R)}^2\le\\\le \frac4{(\lambda_{N+1}-\lambda_N)^2}\sum_{n=1}^\infty\|h_n\|^2_{L^2_\theta(\R)}=\frac4{(\lambda_{N+1}-\lambda_N)^2}\|h\|^2_{L^2_\theta(\R,H)}
\end{multline}
%$$
and formula \eqref{useless} is proved.
\end{proof}
At the next step, we transform equation \eqref{prob-} to the equivalent problem defined on the whole line $t\in\R$. Namely, we introduce the new variable $w(t):=u(t)-e^{-At}u_0^+$ and the new non-linearity:
%$$
\begin{equation}
\tilde F(w,u_0^+):=\begin{cases}F(w(t)+e^{At}u_0^+),\ t\le0\\ 0,\ \ t>0.\end{cases} 
\end{equation}
%$$
Then, as not difficult to see, problem \eqref{prob-} is equivalent to the following problem defined on the whole line $t\in\R$: 
%$$
\begin{equation}
\Dt w+Aw=\tilde F(w,u_0^+),\ \ w\in L^2_\theta(\R,H)
\end{equation}
%$$  
or denoting by $\mathcal T: L^2_\theta(\R,H)\to L^2_\theta(\R,H)$ the solution operator of problem \eqref{lin}, we arrive at
%$$
\begin{equation}\label{eqban}
w=\mathcal T\circ\tilde F(w,u_0^+):=G(w,u_0^+), \ \ w\in L^2_\theta(\R,H),
\end{equation}
%$$
see \cite{Zel} for the details. We want to solve \eqref{eqban} using the Banach contraction theorem and to this end we only need to check that the right-hand side is globally Lipschitz with Lipschitz constant less than one. Note that the Function $\tilde F$ is obviously globally Lipschitz in $L^2(\R,H)$ with Lipschitz constant $L$ (since $F$ is Lipschitz in $H$ with the same constant), so the  estimate based on Lemma \ref{lem} and estimate \eqref{useless} gives    
that the Lipschitz constant of $G$ with respect to $w$ does not exceed $\frac{2L}{\lambda_{N+1}-\lambda_N}$, so the existence of a solution is straightforward if
%$$ 
\begin{equation}\label{gap}
\lambda_{N+1}-\lambda_N>2L
\end{equation}
%$$
and this is the classical spectral gap assumption in the theory of inertial manifolds.
\par 
In our case, the spectral gap assumption  is not assumed to be satisfied and $G$ may be not a contraction in the initial norm in the space $H$. However, the spatial averaging assumption \eqref{av} together with estimates \eqref{lin-n} allows us to verify the contraction in the properly chosen equivalent norm in $H$. Namely, let
%$$
\begin{equation}\label{norm}
\|u\|_\eb^2:=\eb^2\|R_{N,k}u\|^2_H+\|(1-R_{N,k})u\|_H^2.
\end{equation}
%$$
Then, using \eqref{lin-n}, we see that
$$
\|(1-R_{N,k})\mathcal T\circ \tilde F'_w(u)z\|_{L^2_\theta(\R,H)}^2\le \frac {L^2}{k^2}\|z\|^2_{L^2_\theta(\R,H)}\le \frac{L^2}{k^2}\(1+\frac1{\eb^2}\)\|z\|^2_{L^2_\theta(\R,H_\eb)}.
$$
Using now the spatial averaging assumption, we estimate the intermediate part as follows:
%$$
\begin{multline}
\eb^2\|R_{N,k}\mathcal T\circ F'_w(u)z\|^2_{L^2_\theta(\R,H)}\le2\eb^2
\|R_{N,k}\mathcal T\circ F'_w(u)R_{N,k}z\|^2_{L^2_\theta(\R,H)}+\\+
2\eb^2\|R_{N,k}\mathcal T\circ F'_w(u)
(1-R_{N,k})z\|^2_{L^2_\theta(\R,H)}\le2\eb^2\frac{\delta^2}{\rho^2}
\|R_{N,k}z\|^2_{L^2_\theta(\R,H)}+2\eb^2\frac{L^2}{\rho^2}
\|(1-R_{N,k})z\|^2_{L^2_\theta(\R,H)}\le\\\le 
2\frac{\delta^2+\eb^2L^2}{\rho^2}\|z\|^2_{L^2_\theta(\R,H_\eb)}.
\end{multline}
%$$
Thus, combining two last estimates, we get
%$$
\begin{equation}\label{lip}
\|\mathcal T\circ \tilde F'_w(u)z\|_{L^2_\theta(\R,H_\eb)}^2
\le \(\frac{L^2}{k^2}\(1+\frac1{\eb^2}\)+2\frac{\delta^2+\eb^2L^2}{\rho^2}\)\|z\|^2_{L^2_\theta(\R,H_\eb)}.
\end{equation}
%$$
Fixing now $\eb>0$ in an optimal way, we finally get
%$$
\begin{equation}
\|G'_w(w,u_0^+)\|_{\mathcal L(L^2_\theta(\R,H_\eb),L^2_\theta(\R,H_\eb))}^2\le \frac{L^2}{k^2}+2\frac{\delta^2}{\rho^2}+2\sqrt{2}\frac{L^2}{k\rho}
\end{equation}
and assumption \eqref{unknown} guarantees that $G(w,u_0^+)$ is a contraction in the space $L^2_\theta(\R,H_\eb)$ and therefore, by Banach contraction theorem there is a unique solution of problem \eqref{prob-}  for every $u_0^+\in P_NH$. This gives the existence of the inertial manifold. Its Lipschitz continuity and exponential tracking are verified repeating word by word the arguments given in \cite{Zel}, so we omit them here. Theorem \ref{mau} is proved. 
\end{proof} 
 
\section{Existence of an inertial manifold: the case of Navier-Stokes equations}

In this section, we apply the abstract Theorem \ref{mau} to the modified  Leray-$\alpha$ model. First, we adopt this theorem to the case of $3D$ torus. Namely, in slight abuse of the previous notations, we define the projector $R_{N,k}$ to the intermediate modes as follows:
 let $N \in \mathbb{N}$ and $k > 0$ be such that $ N > k$, then
\begin{equation}
R_{k,N}u = \sum_{n \in \mathbb{Z}^3 : N-k\le |n|^2 \le N +k+1} u_n e^{i n \cdot x}.
\end{equation}
%where $\lambda_n$ is an eigenvalues of operator $A$ and in our case $\lambda_n = |n|^2$.

Then, we  consider the following abstract model
\begin{equation}\label{mod}
\pt w +\nu A w + \mathcal{F}(w)=0,
\end{equation}
where $\mathcal{F}(w)$ is some non-linear function and $A$ as before is the Stokes operator. We recall that the spectrum of the Stokes operator $A$ consists of all integers which can be presented as a sum of 3 squares, so by the Legendre 3 square theorem, of all $n$'s which are not of the form $n=4^a(8b+7)$. Thus, we do not have gaps in the spectrum of length more than 3 and the typical distance between  the subsequent non-equal eigenvalues is one. Applying the abstract Theorem \ref{mau} to this particular case, we get the following theorem.  
\begin{theorem}\label{main_th}
Let non-linearity $\mathcal{F}(w): H \to H$ be globally bounded, globally Lipschitz continuous and Gateaux differentiable and
 %let a positive constant $\rho$ be such that
 for every $\delta>0$ and $k>0$ there exist infinitely many values of $N\in \mathbb{N}$ satisfying
%\begin{equation}
%\lambda_{N+1} - \lambda_N \ge \rho
%\end{equation}
%and
\begin{equation}\label{est_to_be_checked}
\|R_{k,N}\circ \mathcal{F}'(w)\circ R_{k,N}v\|_H \le \delta\|v\|_H,
\end{equation}
for every $w, v \in H$.
Assume also that
\begin{equation}
 \frac{L^2}{k^2}+2\frac{\delta^2}{\nu^2}+2\sqrt{2}\frac{L^2}{k\nu}<1,
\end{equation}
where $L$ is a Lipschitz constant of the non-linearity.
Then, for all such $N$'s there exists a finite dimensional Lipschitz continuous inertial manifold for problem \eqref{mod} which is the graph over the linear subspace generated by all Fourier modes $e^{ix.n}$ satisfying $|n|^2\le N$.
\end{theorem}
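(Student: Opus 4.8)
The plan is to obtain Theorem \ref{main_th} as a direct specialization of the abstract Theorem \ref{mau}. First I would put the model \eqref{mod} into the form \eqref{ur} by writing it as $\Dt w+(\nu A)w=-\mathcal F(w)$; the linear operator is then $\nu A$, which is positive definite, self-adjoint and has compact inverse because $A$ is the Stokes operator, and the nonlinearity is $F:=-\mathcal F$. The hypotheses that $\mathcal F$ is globally Lipschitz with constant $L$ and Gateaux differentiable transfer verbatim to $F$ (the sign does not affect the operator norm of $F'$), and the averaging bound \eqref{est_to_be_checked} is literally the averaging principle \eqref{av} for $F'=-\mathcal F'$, with the intermediate projector $R_{k,N}$ of this section playing the role of $R_{N,k}$ from \eqref{ukr}. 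Thus the entire reduction hinges on identifying the gap parameter $\rho$.

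The key step is the arithmetic of the spectrum of $\nu A$. Its eigenvalues are precisely the numbers $\nu m$ with $m$ a positive integer representable as a sum of three squares; by the Legendre--Gauss three-square theorem these are all integers except those of the form $4^a(8b+7)$. Consequently the spectrum has no long gaps---consecutive distinct eigenvalues of $A$ differ by at most $3$---so the classical spectral-gap inequality \eqref{gap} is hopeless for any fixed large $L$, which is exactly the difficulty the spatial averaging method is designed to bypass. What the abstract scheme actually requires is only a lower bound on the gap at the chosen splitting threshold. Splitting the modes at $|n|^2=N$, the largest low eigenvalue equals $\nu m_-$, where $m_-$ is the largest sum of three squares not exceeding $N$, and the smallest high eigenvalue equals $\nu m_+$ with $m_+>N$ the smallest representable integer, so that $m_+\ge N+1$; hence the gap across the threshold is at least $\nu$ for \emph{every} admissible $N$. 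I may therefore take $\rho=\nu$, and substituting $\rho=\nu$ into \eqref{unknown} reproduces precisely the smallness condition imposed in Theorem \ref{main_th}.

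With these identifications the verification becomes mechanical: Lemma \ref{lem} and the resolvent estimate \eqref{lin-n} control the high modes (those outside the window of $R_{k,N}$) through the separation governed by $k$, and the intermediate modes through the gap $\rho/2=\nu/2$, exactly as in the proof of Theorem \ref{mau}; the averaging bound supplies the decisive smallness $\delta$ on the $R_{k,N}\circ\mathcal F'(w)\circ R_{k,N}$ block, which is available for infinitely many $N$ by hypothesis. Applying Theorem \ref{mau} for each such $N$ then yields a finite-dimensional Lipschitz inertial manifold realized as the graph of a map over the subspace $\mathcal P_N H=\mathrm{span}\{e^{in\cdot x}:|n|^2\le N\}$, which is the assertion.

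The step I expect to be the main obstacle is the bookkeeping that reconciles the abstract, eigenvalue-indexed splitting of \eqref{ur} (where the graph is taken over the first $N$ eigenvectors and the window of $R_{N,k}$ is measured in eigenvalue units) with the $|n|^2$-threshold splitting and the projector $R_{k,N}$ used here, and in particular checking that one and the same threshold $N$ simultaneously delivers the gap lower bound $\rho=\nu$ and lies among the infinitely many values for which \eqref{est_to_be_checked} holds. The three-square theorem guarantees that both can be arranged while confirming that no genuine spectral gap in the sense of \eqref{gap} is ever present, so that the passage through Theorem \ref{mau} is not merely convenient but necessary.
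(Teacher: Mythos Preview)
Your proposal is correct and follows essentially the same route as the paper: the paper derives Theorem \ref{main_th} by direct specialization of the abstract Theorem \ref{mau}, invoking the Legendre three-square theorem to conclude that consecutive distinct eigenvalues of $A$ differ by at least one (so that $\rho=\nu$ for the operator $\nu A$), which turns condition \eqref{unknown} into the stated inequality. Your additional care about reconciling the eigenvalue-indexed projector $R_{N,k}$ with the $|n|^2$-threshold projector $R_{k,N}$ is appropriate bookkeeping that the paper leaves implicit.
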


\begin{remark} Let us notice that in contrast to the cases of reaction-diffusion  equation and Cahn-Hilliard equation which are considered in  \cite{mal-par} and \cite{Zel-Ko}, respectively, in order to satisfy conditions of Theorem \ref{main_th} we are going to apply the cut-off procedure in the Fourier space instead of the physical space  that were proposed earlier.
% Also it is worth mentioning that this new cut-off function significantly differs from the one proposed before.
 The choice of the truncation function is dictated by the specific form of the non-linearity and guaranties the validity of the estimate \eqref{est_to_be_checked} in the form which is written here without adding the dependence on the higher norms of $w$.
\end{remark}

Obviously, the non-linearity  $F(w):= B(w,\bar{w})+ B(w,\bar{v})+ B(v,\bar{w})$ does not satisfy conditions of Theorem \ref{main_th} and therefore this theorem is not applicable directly.
% Thus, in order to prove the existence of an inertial manifold firstly we need to modify $F(w)$ in such a way that it acts from $H$ to $H$.
 %Since we are mainly interested in the long-time behaviour of a solution of the \eqref{main_eq_w} we may freely modify the equation outside of the absorbing ball $\mathcal{B}_3 = \{ u \in H^3, \|u\|_{H^3}\le C^*\}$ (due to Theorem \ref{attractor}).
 According to Theorem \ref{attractor}, the set
\begin{equation}\label{abs_ball}
\mathcal{B}_3 = \{ u \in H^3, \|u\|_{H^3}\le C_*\}
\end{equation}
is an absorbing ball. Since we are mainly interested in the long-time behaviour of a solution of the \eqref{main_eq_w} we may freely modify the non-linearity $F(w)$ outside of $\mathcal{B}_3$.
%, more precisely we are going to truncate non-linearity $F(w)$.
% In order to modify the non-linearity $F(w)$ l
 To do this, we introduce the following cut-off function $\theta\in C_0^\infty(\mathbb C)$ such that
 % which will act on each "mode" of  the non-linearity separately from others
  %Namely, the cut-off function has the following form
\begin{equation}
\theta(\xi) = \xi \text{ when } |\xi| \le 1\ \text{ and }\  |\theta(\xi)|\le 2,\
 \forall\xi\in\mathbb C.
\end{equation}
 and let $\vec \theta(\xi):= (\theta(\xi^1), \theta (\xi^2), \theta(\xi^3))^t\in\mathbb C^3$, where $\xi = (\xi^1, \xi^2, \xi^3)\in\mathbb C^3$ be the corresponding vector-valued version of this cut off function.

%Application of the function $\theta$ to an each mode of $w$ and transforms  it into
%According to Theorem \ref{attractor}, the set
%\begin{equation}
%\mathcal{B}_3 = \{ u \in H^3, \|u\|_H^3\le C^*\}
%\end{equation}
%is an absorbing ball.

Then we cut off the unknown variable in the non-linear terms in \eqref{main_eq_w} by replacing it by $W(w)$, where
%notice that application of the function $\theta$ to each "mode" of $w$ may lead to the loss of the divergence-free condition, and so to avoid this we apply the orthogonal projection $P_j$ on the divergence-free space to each "mode" of $w$. We
the function $W:H\to H$ is defined as follows:
\begin{equation}\label{3.wdef}
W(w):= \sum_{j\in \mathbb{Z}^3, j \not = 0} \frac{C_*}{|j|^3}P_j\vec\theta\left(\frac{|j|^3 w_j}{C_*}\right)e^{ix \cdot j},
\end{equation}
where $C_*$ is the size of the absorbing ball \eqref{abs_ball}, $P_j$ are the Leray projector matrices defined by \eqref{0.pmatrix} and
$$
w=\sum_{j\in \mathbb{Z}^3, j \not = 0}w_je^{ix.j}
$$
(we remind that $w=(w^1,w^2,w^3)$ is a vector variable and each coefficient $w_j=(w_j^1,w_j^2,w_j^3)\in\mathbb C^3$ is also a vector). The next lemma shows that this cut off procedure does not change the equation on the attractor $\mathcal A$.
\begin{lemma}\label{Lem3.cut} For every $w\in\mathcal B_3$ (where $\mathcal B_3$ is an absorbing ball of the equation \eqref{main_eq_w} defined by \eqref{abs_ball}), we have
$$
W(w)=w.
$$
\end{lemma}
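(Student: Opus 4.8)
The plan is to verify the identity coefficient by coefficient in Fourier space, showing that under the hypothesis $w\in\mathcal B_3$ the vector cut-off $\vec\theta$ appearing in \eqref{3.wdef} acts as the identity on every mode. Writing $w=\sum_{j\ne0}w_je^{ix\cdot j}$ with $w_j\in\mathbb C^3$, it suffices to prove that for each $j\ne0$ one has $\frac{C_*}{|j|^3}P_j\vec\theta\big(\frac{|j|^3w_j}{C_*}\big)=w_j$, since summing these identities over $j$ reproduces $w$.

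First I would extract a pointwise bound on the Fourier coefficients from the $H^3$ control. By the Parseval formula \eqref{Poin} for the $H^3$-norm, for every fixed $j\ne0$ the single term $|j|^6|w_j|^2$ is dominated by the full sum $\|w\|_{H^3}^2$, which by the definition \eqref{abs_ball} of the absorbing ball is at most $C_*^2$. Hence $|j|^3|w_j|\le C_*$, i.e. $\big|\frac{|j|^3w_j}{C_*}\big|\le1$ for every $j\ne0$.

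Next I would pass from the vector bound to the scalar arguments of $\theta$. Since $\vec\theta$ is applied componentwise and $|w_j^l|\le|w_j|$ for each coordinate $l\in\{1,2,3\}$, the previous step gives $\big|\frac{|j|^3w_j^l}{C_*}\big|\le1$ for every component. The defining property $\theta(\xi)=\xi$ for $|\xi|\le1$ therefore forces $\vec\theta\big(\frac{|j|^3w_j}{C_*}\big)=\frac{|j|^3w_j}{C_*}$, so that the prefactor cancels and $\frac{C_*}{|j|^3}\vec\theta\big(\frac{|j|^3w_j}{C_*}\big)=w_j$.

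It remains to dispose of the projector matrices $P_j$. Because $w\in H$ is divergence free with zero mean, it already lies in the range of the Leray projector, so $Pw=w$ and consequently $P_jw_j=w_j$ for every $j\ne0$ by \eqref{0.pmatrix}. Combining this with the previous display yields the desired coefficientwise identity, and summation over $j$ gives $W(w)=w$. The only point requiring care is the componentwise nature of the cut-off: one must deduce the bound on each scalar coordinate $w_j^l$ from the Euclidean bound on the vector $w_j$, which is immediate since the Euclidean norm dominates every coordinate; the rest is a direct cancellation.
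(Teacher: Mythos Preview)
Your argument is correct and follows essentially the same route as the paper's proof: bound each Fourier mode via $|j|^6|w_j|^2\le \|w\|_{H^3}^2\le C_*^2$, infer the componentwise bound $|w_j^l|\le C_*/|j|^3$, and conclude that $\vec\theta$ acts as the identity on every mode. You are in fact slightly more explicit than the paper in noting that $P_jw_j=w_j$ because $w\in H$, a point the paper leaves implicit.
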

\begin{proof} Indeed, according to \eqref{abs_ball},
$$
\sum_{j\ne0}|j|^6|w_j|^2\le C_*^2
$$
and therefore
$$
|w_j^k|\le\frac {C_*}{|j|^3},\ \ k=1,2,3,\ \ j\in\mathbb Z^3\backslash\{0\}.
$$
Thus $\theta(|j|^3w_j^k/C_*)=|j|^3w_j^k/C_*$ and $W(w)=w$.
\end{proof}
%where $P_j$ is the orthogonal projection of the each  "mode" on the divergence-free space (Helmholtz-Leray projection, but written for each "mode" separately).
%\begin{}
%Properties of $W(w)$.
The basic properties of the map $W$ are collected in the following two lemmas.
\begin{lemma}\label{Lem3.b-lip} The map $W$ is globally bounded as a map from $H$ to $H^{3/2-\eb}$ for all $\eb>0$, i.e.,
%$$
\begin{equation}\label{bound_F}
\|W(w)\|_{H^{3/2-\eb}}\le C_\eb,
\end{equation}
%$$
where the constant $C_\eb$ depends on $\eb>0$, but is independent of $w\in H$. Moreover, the map $W$ is globally Lipschitz as a map from $H$ to $H$:
%$$
\begin{equation}\label{Lipsch_F}
\|W(w_1)-W(w_2)\|_H\le L_1\|w_1-w_2\|_H,
\end{equation}
%$$
where the constant $L_1$ is independent of $w_i\in H$.
\end{lemma}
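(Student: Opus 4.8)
The plan is to prove both assertions by working directly with the Fourier coefficients of $W(w)$ and exploiting two elementary facts: that $\|P_j\|=1$ for all $j\ne0$, and that $\theta\in C_0^\infty(\mathbb C)$ is both bounded ($|\theta|\le2$) and globally Lipschitz. Writing the $j$-th coefficient of $W(w)$ as $(W(w))_j=\frac{C_*}{|j|^3}P_j\vec\theta(|j|^3w_j/C_*)$, every estimate reduces to a mode-by-mode bound in $\mathbb C^3$ followed by a summation in $j$ via the Parseval identity \eqref{Poin}.

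For the boundedness \eqref{bound_F}, I would first note that each scalar component of $\vec\theta$ is bounded by $2$, so $|\vec\theta(\cdot)|\le2\sqrt3$, and hence $|(W(w))_j|\le 2\sqrt3\,C_*/|j|^3$ uniformly in $w$, using $\|P_j\|=1$. Substituting this into the definition of the $H^{3/2-\eb}$-norm gives
$$
\|W(w)\|_{H^{3/2-\eb}}^2=\sum_{j\ne0}|j|^{3-2\eb}|(W(w))_j|^2\le 12\,C_*^2\sum_{j\ne0}\frac1{|j|^{3+2\eb}},
$$
so the claim follows from the convergence of the lattice sum $\sum_{j\in\mathbb Z^3\setminus\{0\}}|j|^{-3-2\eb}$. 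This last sum converges precisely because, grouping by $|j|\approx R$, the number of lattice points grows like $R^2$ while the summand decays like $R^{-3-2\eb}$, giving an integrable tail $R^{-1-2\eb}$ for every $\eb>0$; the resulting constant $C_\eb$ depends only on $\eb$ and $C_*$, and not on $w$.

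For the Lipschitz estimate \eqref{Lipsch_F}, I would use that $\vec\theta$ inherits the Lipschitz constant $L_\theta$ of the scalar $\theta$ componentwise, i.e. $|\vec\theta(\xi)-\vec\theta(\eta)|\le L_\theta|\xi-\eta|$. The crucial observation is that the two scaling factors cancel exactly: applying $\|P_j\|=1$ and the Lipschitz bound with arguments $|j|^3(w_i)_j/C_*$ yields
$$
|(W(w_1))_j-(W(w_2))_j|\le\frac{C_*}{|j|^3}\,L_\theta\,\frac{|j|^3}{C_*}\,|(w_1)_j-(w_2)_j|=L_\theta\,|(w_1)_j-(w_2)_j|.
$$
Squaring and summing over $j$ then gives $\|W(w_1)-W(w_2)\|_H\le L_\theta\|w_1-w_2\|_H$, so one may take $L_1=L_\theta$.

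Neither step presents a genuine obstacle; the only point requiring care is the convergence threshold in the boundedness argument. The sum $\sum_{j}|j|^{-3}$ over $\mathbb Z^3$ diverges, which is exactly why one cannot reach the endpoint $H^{3/2}$ and must settle for $H^{3/2-\eb}$ with a constant that blows up as $\eb\to0$. The exact cancellation of the $|j|^3$ factors in the Lipschitz bound --- a feature deliberately built into the design of the cut-off \eqref{3.wdef} --- is what makes $W$ globally Lipschitz from $H$ to $H$ with a constant independent of the mode.
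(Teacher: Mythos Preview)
Your proof is correct and follows essentially the same approach as the paper: both arguments bound the Fourier coefficients of $W(w)$ directly, using $|\vec\theta|\le C$ for the $H^{3/2-\eb}$-boundedness (reducing to the convergence of $\sum_{j\ne0}|j|^{-3-2\eb}$) and the global Lipschitz property of $\vec\theta$ (equivalently, boundedness of $\vec\theta'$) for the cancellation of the $|j|^3$ scaling in the Lipschitz estimate. Your presentation is, if anything, slightly more careful with the constants and the justification of the lattice-sum convergence.
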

\begin{proof} Indeed, from the explicit form of the $W(w)$ and the fact that $\vec\theta$ is bounded, we have
%$$
\begin{equation}\label{bound_F1}
\|W(w)\|^2_{H^{3/2 - \varepsilon}}  \le 4\sum_{j\in \mathbb{Z}^3, j \not = 0}|j|^{3-2\eb}\frac{C_*^2} {|j|^6}< C_1 \sum_{j\in \mathbb{Z}^3, j \not = 0}\frac{1}{|j|^{3+2\varepsilon}}\le C_\eb
\end{equation}
%$$
and the global boundedness is proved. Analogously, using that $\vec{\theta}'$ is globally bounded, we have
$$
\bigg|\vec\theta\left(\frac{|j|^3 (w_1)_j}{C_*}\right) - \vec\theta\left(\frac{|j|^3 (w_2)_j}{C_*}\right)\bigg|\le
K\frac{|j|^3}{C_*}|(w_1)_j-(w_2)_j|
$$
and, consequently,
%$$
\begin{multline}\label{Lipsch_F1}
\|W(w_1)- W(w_2)\|_{H}^2\le
\sum_{j\in \mathbb{Z}^3, j \not = 0}\frac{C_*^2}{|j|^6}\bigg|\vec\theta\left(\frac{|j|^3 (w_1)_j}{C_*}\right) - \vec\theta\left(\frac{|j|^3 (w_2)_j}{C_*}\right)\bigg|^2 \le\\\le L_1^2\sum_{j\in \mathbb{Z}^3, j \not = 0}|(w_1)_j-(w_2)_j|^2=L_1^2 \|w_1 - w_2\|^2_H.
\end{multline}
%$$
\end{proof}
\begin{lemma}\label{Lem3.dif} The map $W$ is Gateaux differentiable as the map from $H$ to $H$, its derivative is given by the expression
%$$
\begin{equation}\label{3.wpdef}
W'(w)z:= \sum_{j\in \mathbb{Z}^3, j \not = 0} P_j\vec\theta'\left(\frac{|j|^3 w_j}{C_*}\right)z_je^{ix \cdot j},
\end{equation}
%$$
and $W'(w)\in\mathcal L(H,H)$ is globally bounded in the following sense:
%$$
\begin{equation}\label{est-der}
\|W'(w)\|_{\mathcal L(H,H)}\le L_1,
\end{equation}
%$$
where $L_1$ is independent of $w\in H$. Moreover, for every $\delta>0$ and every $w_i\in H^{3}$,
%$$
\begin{equation}\label{3.hol}
\|W(w_1)-W(w_2)-W'(w_1)(w_1-w_2)\|_H\le L_2\|w_1-w_2\|_H\|w_1-w_2\|_{H^{3}}^\delta.
\end{equation}
%$$
\end{lemma}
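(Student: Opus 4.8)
The plan is to prove the three assertions in turn, exploiting throughout that $W$ acts \emph{diagonally} on the Fourier modes, so that every estimate reduces to a computation on the single symbol $\vec\theta$ followed by a summation over $j\in\mathbb Z^3\setminus\{0\}$. For later use I record that, since $\theta$ is smooth with bounded first and second derivatives, there are constants $K,M>0$ with $|\vec\theta'(\xi)|\le K$ and $|\vec\theta(\xi_1)-\vec\theta(\xi_2)-\vec\theta'(\xi_2)(\xi_1-\xi_2)|\le M|\xi_1-\xi_2|^2$ for all $\xi,\xi_1,\xi_2\in\mathbb C^3$, and that $\|P_j\|=1$ for every $j\neq0$ (as already noted after \eqref{0.pmatrix}).

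For the Gateaux differentiability I would fix $w,z\in H$ and let the real parameter $s\to0$. On each mode the chain rule gives
\[
\lim_{s\to0}\frac1s\Big(\tfrac{C_*}{|j|^3}P_j\vec\theta(\tfrac{|j|^3(w_j+sz_j)}{C_*})-\tfrac{C_*}{|j|^3}P_j\vec\theta(\tfrac{|j|^3w_j}{C_*})\Big)=P_j\vec\theta'(\tfrac{|j|^3w_j}{C_*})z_j,
\]
which is exactly the $j$-th mode of the claimed $W'(w)z$. By the mean value theorem each difference quotient and each limit is bounded in modulus by $K|z_j|$, so the $j$-th summand of $\|s^{-1}(W(w+sz)-W(w))-W'(w)z\|_H^2$ is dominated by $4K^2|z_j|^2$, which is summable since $\sum_j|z_j|^2=\|z\|_H^2<\infty$. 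Dominated convergence for series then lets the mode-wise limits pass through the sum, proving differentiability with the stated derivative. The bound \eqref{est-der} is immediate from the same ingredients, since $\|W'(w)z\|_H^2=\sum_j|P_j\vec\theta'(\cdots)z_j|^2\le K^2\sum_j|z_j|^2$, so one may take $L_1=K$.

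The heart of the lemma is the remainder estimate \eqref{3.hol}, and this is where I expect the only real care to be needed. Writing $d:=w_1-w_2$, $x_j:=|j|^3(w_1)_j/C_*$ and $y_j:=|j|^3(w_2)_j/C_*$, so that $x_j-y_j=|j|^3d_j/C_*$, the $j$-th mode of the remainder equals $\tfrac{C_*}{|j|^3}P_j(\vec\theta(x_j)-\vec\theta(y_j)-\vec\theta'(x_j)(x_j-y_j))$. Combining the quadratic Taylor bound $M|x_j-y_j|^2$ with the trivial linear bound $2K|x_j-y_j|$ and interpolating the minimum of the two with exponent $\delta$ gives, for every $j$,
\[
\big|\vec\theta(x_j)-\vec\theta(y_j)-\vec\theta'(x_j)(x_j-y_j)\big|\le C_\delta|x_j-y_j|^{1+\delta},\qquad C_\delta:=(2K)^{1-\delta}M^\delta,
\]
so that, using $\|P_j\|=1$ and $|x_j-y_j|=|j|^3|d_j|/C_*$, the $j$-th mode of the remainder is bounded by $C_\delta C_*^{-\delta}|j|^{3\delta}|d_j|^{1+\delta}$.

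It remains to sum these contributions, and the key observation that yields the \emph{product} structure on the right of \eqref{3.hol} is that the surplus frequency weight can be absorbed into a single global $H^3$-factor. Indeed,
\[
\|W(w_1)-W(w_2)-W'(w_1)(w_1-w_2)\|_H^2\le C_\delta^2C_*^{-2\delta}\sum_{j\neq0}|j|^{6\delta}|d_j|^{2+2\delta}=C_\delta^2C_*^{-2\delta}\sum_{j\neq0}|d_j|^2\big(|j|^6|d_j|^2\big)^{\delta},
\]
and since $|j|^6|d_j|^2\le\sum_{j'\neq0}|j'|^6|d_{j'}|^2=\|d\|_{H^3}^2$ for each individual $j$, the factor $(|j|^6|d_j|^2)^{\delta}\le\|d\|_{H^3}^{2\delta}$ pulls out of the sum, leaving $C_\delta^2C_*^{-2\delta}\|d\|_{H^3}^{2\delta}\sum_j|d_j|^2=C_\delta^2C_*^{-2\delta}\|d\|_{H^3}^{2\delta}\|d\|_H^2$; taking square roots gives \eqref{3.hol} with $L_2=C_\delta C_*^{-\delta}$, the finiteness of $\|d\|_{H^3}$ being guaranteed by the hypothesis $w_1,w_2\in H^3$. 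Thus the two delicate points are the componentwise reading of $\vec\theta$ together with the harmless matrix factor $P_j$, and the interpolation between the linear and quadratic remainder bounds that converts the second-order Taylor estimate into the $(1+\delta)$-power; once these are in place the summation is elementary and the full first power of $\|d\|_H$ survives.
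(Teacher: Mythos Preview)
Your proof is correct and follows essentially the same strategy as the paper: mode-wise reduction to properties of $\vec\theta$, dominated convergence for the Gateaux limit, and the key factorization $|j|^{6\delta}|d_j|^{2+2\delta}=|d_j|^2(|j|^6|d_j|^2)^\delta$ with $|j|^6|d_j|^2\le\|d\|_{H^3}^2$ to extract the product structure in \eqref{3.hol}. The only cosmetic difference is that the paper first proves the case $\delta=1$ (via the same factorization, writing $\sum_j|j|^6|z_j|^4\le\max_j|z_j|^2\cdot\|z\|_{H^3}^2$), notes $\delta=0$ is trivial, and then interpolates globally, whereas you interpolate the linear and quadratic Taylor remainder bounds at the single-mode level to obtain the $(1+\delta)$-power directly; both routes are equivalent.
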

\begin{proof} The fact that the linear operator $W'(w)$ is bounded and estimate \eqref{est-der} are immediate corollaries of the fact that $\vec\theta'$ is globally bounded. Let us check that $W'(w)$ is Gateaux derivative of $W$. To this end, we note that
%$$
\begin{multline}\label{3.dlin}
\frac{C_*}{|j|^3}\(\vec\theta(|j|^3(w_j+\eb z_j)/C_*)-\vec\theta(|j|^3w_j/C_*)\)-\eb\vec\theta'(|j|^3w_j/C_*)z_j=\\=
\eb z_j\int_0^1[\vec\theta'(|j|^3(w_j+\eb s z_j)/C_*)-\vec\theta'(|j|^3w_j/C_*)]\,ds
\end{multline}
%$$
and, therefore,
%$$
\begin{multline}\label{3.dlin2}
\bigg\|\frac{W(w+\eb z)-W(w)}\eb-W'(w)z\bigg\|_H^2\le\\\le \sum_{j\in\mathbb Z^3\,j\ne0}|z_j|^2\int_0^1\big|\vec\theta'(|j|^3(w_j+\eb s z_j)/C_*)-\vec\theta'(|j|^3w_j/C_*)\big|^2\,ds.
\end{multline}
%$$
To verify the differentiability, we need to check that, for every fixed $w,z\in H$ the right-hand side of \eqref{3.dlin2} tends to zero as $\eb\to0$. To this end, we fix a large $N$, split the sum in two pieces and use that $\vec\theta'$ is globally bounded:
%$$
\begin{multline}
\sum_{j\in\mathbb Z^3\,j\ne0}|z_j|^2\int_0^1\big|\vec\theta'(|j|^3(w_j+\eb s z_j)/C_*)-\vec\theta'(|j|^3w_j/C_*)\big|^2\,ds=\\=\sum_{j\in\mathbb Z^3\,j\ne0,|j|\le N}+\sum_{j\in\mathbb Z^3\,|j|> N}\le\\\le \sum_{j\in\mathbb Z^3\,j\ne0\,|j|\le N}|z_j|^2\int_0^1\big|\vec\theta'(|j|^3(w_j+\eb s z_j)/C_*)-\vec\theta'(|j|^3w_j/C_*)\big|^2\,ds+
K\sum_{j\in\mathbb Z^3\,,|j|> N}|z_j|^2.
\end{multline}
%$$
Since $\vec\theta'$ is continuous, the first term in the right-hand side tends to zero as $\eb\to0$ for every fixed $N$ and the second term can be made arbitrarily small by the choice of $N$ since $z\in H$ is fixed. This proves the convergence to zero of the left-hand side of \eqref{3.dlin2} and the Gateaux differentiability is proved.
\par
Let us prove the H\"older continuity \eqref{3.hol}. Denoting $w=w_2$, $z=w_1-w_2$ and using that $\vec\theta''$ is globally bounded, we have
%$$
\begin{multline}\label{3.dlin4}
\bigg|\frac{C_*}{|j|^3}\(\vec\theta(|j|^3(w_j+ z_j)/C_*)-\vec\theta(|j|^3w_j/C_*)\)-\vec\theta'(|j|^3w_j/C_*)z_j\bigg|=\\=
\bigg| z_j\int_0^1[\vec\theta'(|j|^3(w_j+ s z_j)/C_*)-\vec\theta'(|j|^3w_j/C_*)]\,ds\bigg|\le C|j|^3|z_j|^2.
\end{multline}
%$$
Therefore,
%$$
\begin{multline}
\|W(w+z)-W(z)-W'(w)z\|_H^2\le\\\le K\sum_{j\in\mathbb Z^3\, j\ne0}|j|^6|z_j|^4\le \max_j\{|z_j|^2\}\sum_{j\in\mathbb Z^3\, j\ne0}|j|^6|z_j|^2\le K\|z\|^2_{H}\|z\|^2_{H^3}.
\end{multline}
%$$
Thus, inequality \eqref{3.hol} is verified for $\delta=1$. For $\delta=0$ it follows from \eqref{Lipsch_F} and \eqref{est-der}. For intermediate values $0<\delta<1$ it can be obtained by interpolation and the lemma is proved.
\end{proof}

%Therefore, substituting $W$ instead of $w$ into the bilinear form
%\begin{equation}
%B(w,\bar{w}) = P_\sigma ((w \cdot \nabla)\bar{w}) =  P_\sigma\left( \sum_{j\in \mathbb{Z}^3, j \not = 0} w_j e^{i x \cdot j}  \sum_{k \in \mathbb{Z}^3, k \not = 0 } \frac{i k\cdot w_k}{1+ \alpha^2 |k|^2}  e^{i x \cdot k}\right).
%\end{equation}
%we get
%\begin{multline}
%\tilde{B}({W},\bar{{W}}) =  P_\sigma\big( \sum_{j\in \mathbb{Z}^3, j \not = 0} \frac{C}{|j|^3}P_j\left(\theta\left(\frac{|j|^3 w_j}{C}\right)\right)e^{ix \cdot j} \times\\
%\sum_{k\in \mathbb{Z}^3, k \not = 0} \frac{C}{|k|^3}\frac{i k}{1+ \alpha^2 |k|^2}\cdot P_k\left(\theta\left(\frac{|k|^3 w_k}{C}\right)\right)e^{ix \cdot k}\big),
%\end{multline}
%where $P_\sigma$ is the Helmholtz-Leray projection. Other parts of non-linearity change in a similar way $F(w) \to F(W)$.
%For the sake of brevity, let us denote
%\begin{equation}\label{tilde_W}
%\tilde{q}:=\sum_{k\in \mathbb{Z}^3, k \not = 0} \frac{i k \cdot q_k}{1+ \alpha^2 |k|^2}e^{ix \cdot k},\text{ for any }q\in H.
%\end{equation}
%

Finally, we introduce the following cut-off version of equation \eqref{main_eq_w}
%$$
\begin{equation}\label{main_eq_w_mod}
\pt w + \nu A w + F(W(w)) = 0,
\end{equation}
%$$
where
$$
F(W) = B(W,\bar{W}) +B(W,\bar{v})+ B(v,\bar{W}),\ \ \bar W:=(1-\alpha\Delta)^{-1}W.
$$
%Let us note that since the $H^3$-norm of the solutions is bounded on the attractor $\mathcal{A}$ (due to Theorem \ref{attractor}), then $w$ coincides with $W$ when time tends to infinity.
Then, due to Lemma \ref{Lem3.cut}, equations \eqref{main_eq_w} and \eqref{main_eq_w_mod} coincide on the absorbing ball $\mathcal B_3$ and, in particular, on the attractor $\mathcal A$ of the initial problem \eqref{main_eq_w}, so it is sufficient to prove the existence of an inertial manifold for  the  truncated equation \eqref{main_eq_w}.
\par
Let us verify the assumptions of Theorem \ref{main_th} for equation \eqref{main_eq_w}. We start with the analytic properties of the map $F$.

%The main advantage of using this cut-off function is that it preserves the zero mean of the derivative of the non-linearity.
\begin{proposition}\label{Prop3.Flip}
The modified non-linearity $F(W)$ is globally bounded, globally Lipschitz continuous and Gateaux differentiable as a map from $H$ to $H$.
\end{proposition}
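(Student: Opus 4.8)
The plan is to establish the three required analytic properties of the composite map $F(W(\cdot))$ by combining the already-proven properties of the cut-off map $W$ from Lemma \ref{Lem3.b-lip} and Lemma \ref{Lem3.dif} with the standard estimates \eqref{est_B}--\eqref{est_B_3} on the bilinear form $B$. The key structural observation is that each term in $F(W)=B(W,\bar W)+B(W,\bar v)+B(v,\bar W)$ is bilinear, so it suffices to control $B$ applied to arguments that have been pre-processed by $W$, which by Lemma \ref{Lem3.b-lip} lands in $H^{3/2-\eb}$ for every $\eb>0$ rather than merely in $H$. This gain of essentially $3/2$ derivatives is exactly what makes the nonlinear terms well-defined and bounded as maps into $H$, and it is the heart of why the Fourier-space cut-off was chosen in place of a physical-space truncation.

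First I would prove global boundedness. Set $W:=W(w)$ and recall from \eqref{bound_F} that $\|W\|_{H^{3/2-\eb}}\le C_\eb$ uniformly in $w\in H$; via \eqref{bar_u_u} the filtered field $\bar W=(1-\alpha^2\Delta)^{-1}W$ gains two further derivatives, so $\|\bar W\|_{H^{7/2-\eb}}\le C_\eb'$. For the leading term $B(W,\bar W)=P((W\cdot\nabla)\bar W)$ I would use that $H^{3/2-\eb}(\mathbb T^3)$ and $H^{1/2-\eb}(\mathbb T^3)$ multiply continuously into $H$ (or simply invoke that $H^{3/2-\eb}$ controls the $L^\infty$ norm of $W$ while $\nabla\bar W\in H^{5/2-\eb}\subset L^2$), which yields $\|B(W,\bar W)\|_H\le C\|W\|_{H^{3/2-\eb}}\|\bar W\|_{H^{7/2-\eb}}\le C_\eb$. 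The mixed terms $B(W,\bar v)$ and $B(v,\bar W)$ are handled identically, using the fixed bound $\|v\|_{H^2}$ from \eqref{3.disV} for the stationary solution, giving global boundedness of $F(W)$ from $H$ to $H$.

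Next I would verify global Lipschitz continuity. Writing $W_i:=W(w_i)$ and exploiting bilinearity, the difference $F(W_1)-F(W_2)$ telescopes into terms of the form $B(W_1-W_2,\bar W_1)$, $B(W_2,\overline{W_1-W_2})$, $B(W_1-W_2,\bar v)$, and $B(v,\overline{W_1-W_2})$. Each is estimated by the same product bounds as above, placing one factor in $H^{3/2-\eb}$ (uniformly bounded) and the difference factor in a suitable space; the crucial point is that by \eqref{Lipsch_F} the map $W$ is globally Lipschitz \emph{from $H$ to $H$}, so $\|W_1-W_2\|_H\le L_1\|w_1-w_2\|_H$, and this $H$-Lipschitz bound feeds directly into the lower-order factor of each bilinear estimate. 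The main technical point I expect here, and the step I would flag as the principal obstacle, is a small derivative-counting issue: the difference $W_1-W_2$ is only controlled in $H$ by the Lipschitz bound \eqref{Lipsch_F}, not in any higher norm, so in each bilinear term the difference must be assigned to the argument slot of $B$ that requires the fewest derivatives, while the uniformly bounded $H^{3/2-\eb}$ factor is placed in the slot demanding more regularity; arranging the estimates \eqref{est_B}--\eqref{est_B_2} (together with the two-derivative gain \eqref{bar_u_u} for the filtered fields) so that every such pairing closes is the delicate bookkeeping of the proof.

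Finally, for Gateaux differentiability I would compute the derivative by the chain rule, $\big(F\circ W\big)'(w)z = DF(W(w))\,W'(w)z$, where $W'(w)$ is the bounded linear operator given explicitly in \eqref{3.wpdef} and $DF(W)$ is the derivative of the bilinear map $F$, namely $DF(W)h=B(h,\bar W)+B(W,\bar h)+B(h,\bar v)+B(v,\bar h)$. Since $F$ is a sum of continuous bilinear forms its differentiability is automatic, and $W$ is Gateaux differentiable from $H$ to $H$ by Lemma \ref{Lem3.dif}; the only thing to check is that the composition's difference quotient converges, which follows from the $H$-to-$H$ differentiability of $W$ together with the Lipschitz continuity of $DF$ on bounded sets, the latter being a consequence of the uniform $H^{3/2-\eb}$ bound on $W$ that keeps all arguments of $B$ in a fixed bounded set. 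Assembling these three parts completes the proof of the proposition.
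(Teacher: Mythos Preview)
Your approach is correct and essentially identical to the paper's: exploit the uniform $H^{3/2-\eb}$ bound on $W$, the two-derivative gain from the filter $(1-\alpha^2\Delta)^{-1}$, telescope the bilinear difference, and place the $H$-controlled factor $W_1-W_2$ in the low-regularity slot of $B$ in each term. One small correction to your parenthetical ``simpler'' route: $H^{3/2-\eb}(\mathbb T^3)$ does \emph{not} embed into $L^\infty$ in three dimensions (the critical exponent is exactly $3/2$), so that alternative fails; the paper instead records the two concrete estimates $\|B(p,q)\|_H\le C\|p\|_H\|q\|_{H^3}$ (via $\nabla q\in H^2\subset L^\infty$) and $\|B(p,q)\|_H\le C\|p\|_{H^1}\|q\|_{H^2}$ (via H\"older $L^3$--$L^6$), which is precisely the derivative bookkeeping you flagged as the principal obstacle.
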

\begin{proof} These assertions follow in a straightforward way from the analogous properties of the map $W$ proved above and the standard estimates for the quadratic form $B$. Indeed,

\emph{Global boundedness.}
Due to Sobolev's embedding $H^2 \subset L^\infty$, we have
%$$
\begin{equation}\label{3.B1}
\|B(p,q)\|_H\le C\|p\|_{H}\|\nabla q\|_{L^\infty}\le  C_1\|p\|_H\|q\|_{H^3},\ \ p\in H,\ \ q\in H^3
\end{equation}
%$$
which together with estimate \eqref{bound_F} and maximal $(H^1,H^3)$-regularity for operator $(1-\alpha\Delta)$ gives
\begin{multline}
\|B(W,\bar{W})  + B(v, \bar{W}) + B(W,\bar{v}) \|_H \le\\\le
C(\|W\|_H \|\bar{W}\|_{H^3} + \|v\|_H \|\bar{W}\|_{H^3} + \|W\|_H \| \bar{v}\|_{H^3})\le  {C}_{\alpha}(\|v\|_{H^1} +1), \ \ w \in H,
\end{multline}
where constant $C_\alpha$ depends on $\alpha$ but independent of $\|w\|_H$.

\emph{Global Lipschitz continuity.} Let us take $w_1, w_2 \in H$ and denote $W_i:=W(w_i)$, then
%$$
\begin{multline}\label{Lip}
\|B(W_1,\bar W_1)+B(v,\bar{W}_1)+B(W_1,\bar v)-B(W_2,\bar W_2)-B(v,\bar W_2)-B(W_2,\bar v)\|_H = \\
\| B(W_1-W_2,\bar W_1)+B(W_2,\bar W_1-\bar W_2)+B(v,\bar W_1-\bar W_2)+B(W_1-W_2,\bar v)\|_H.
\end{multline}
%$$
As before in order to estimate the first and the fourth terms in the right-hand side of \eqref{Lip}, we use estimate \eqref{3.B1} for the quadratic form $B$ and estimates \eqref{bound_F} and \eqref{Lipsch_F}:
%$$
\begin{multline}\label{Lip_1}
\|B(W_1- W_2,\bar W_1)\|_H+\|B(W_1-W_2,\bar v)\|_H \le C
\|W_1 -W_2\|_H\(\|\bar W_1\|_{H^3}+\|\bar v\|_{H^3}\)\le\\\ \le C_1 \|w_1 - w_2\|_H\(\|W_1\|_{H^1}+\|v\|_{H^1}\)\le \frac12L\|w_1-w_2\|_{H},
\end{multline}
%$$
where the constant $L$ depends on $\alpha$ and $\|v\|_{H^1}$.
\par
To estimate the remaining terms, we use the following estimate for the quadratic form $B$:
%$$
\begin{equation}\label{3.B2}
\|B(p,q)\|_{H}\le C\|p\|_{L^3}\|\nabla q\|_{L^6}\le C_1\|p\|_{H^1}\|q\|_{H^2},
\end{equation}
%$$
where we have used H\"older's inequality with exponents $3$ and $3/2$ and Sobolev's embedding $H^1 \subset L^p$, for $1\le p \le 6$. Then, due to estimates \eqref{bound_F} and \eqref{Lipsch_F}, we have
%$$
\begin{multline}\label{Lip_2}
\|B(W_2,\bar W_1-\bar W_2)\|_H+\|B(v,\bar W_1-\bar W_2)\|_H\le\\\le C\(\|W_2\|_{H^1}+\|v\|_{H^1}\) \|\bar W_1- \bar W_2\|_{H^2}\le \frac12L \|w_1 - w_2\|_H.
\end{multline}
%$$
Combining estimates \eqref{Lip_1} and \eqref{Lip_2},  we obtain the desired Lipschitz continuity of the modified non-linearity $F$:
%$$
\begin{equation}
\|F(W(w_1)) - F(W(w_2))\|_H \le L\|w_1 - w_2\|_H.
\end{equation}
%$$
Since the Gateaux differentiability of $F$ is an immediate corollary of the differentiability of $W$, the proposition is proved.
\end{proof}
Hence to prove the existence of an inertial manifold for the modified-Leray-$\alpha$ model it remains to show the validity of the inequality \eqref{est_to_be_checked}  only.
% Let us denote by
%\begin{equation}
%\tilde{q} =\sum_{k\in \mathbb{Z}^3, k \not = 0} \frac{i k \cdot q_k}{1+ \alpha^2 |k|^2}e^{ix \cdot k}.
%\end{equation}
According to Lemma \ref{Lem3.dif} and the chain rule, the Gateaux derivative of the non-linearity $F(W)$ can be written as
%$$
\begin{equation} \label{Frechet}
F'(w)z = P\(\left((W'(w)z)\cdot \nabla \right)(\bar W(w)+\bar{v})+ \left((W(w)+v)\cdot \nabla\right)(\bar W'(w)z)\)
\end{equation}
%$$
and, analogously to the proof of Proposition \ref{Prop3.Flip}, we see that $F'(w)$ is a bounded linear operator in $H$ satisfying
$$
\|F'(w)\|_{\mathcal L(H,H)}\le L
$$
for all $w\in H$.
For simplicity, we denote by
\begin{equation}\label{h}
h=h(w)z: = W'(w)z
\end{equation}
then \eqref{Frechet} may be rewritten as
%$$
\begin{equation}
F'(W)z=B(h,\bar W+\bar v)+B(W+v,\bar h)=P\(\left(h\cdot \nabla \right)(\bar{W}+\bar{v}) +\left(( W + v )\cdot \nabla\right)\bar h\),
\end{equation}
%$$
where, as usual, $\bar h:=(1-\alpha\Delta)^{-1}h$.
\par
Analogously to \cite{mal-par}, we use the following result from harmonic analysis in order to verify  the key estimate \eqref{est_to_be_checked}.
\begin{theorem}
Let
\begin{equation}
\mathcal{C}_N^k:=\{l \in \mathbb{Z}^3 : N-k\le |l|^2\le N +k\}, \ B_\rho:=\{l \in \mathbb{Z}^3: |l|^2 \le \rho\}.
\end{equation}
Then for any $k>0$ and $\rho>0$ there are infinitely many $N \in \mathbb{N}$ such that
\begin{equation} \label{num_th}
\left(\mathcal{C}_N^k - \mathcal{C}_N^k\right)\cap B_\rho = \{0\}.
\end{equation}
\end{theorem}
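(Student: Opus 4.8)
The plan is to show that the set of \emph{bad} values of $N$—those for which $(\mathcal{C}_N^k - \mathcal{C}_N^k)\cap B_\rho \ne \{0\}$—has asymptotic density zero in $\mathbb{N}$, so that infinitely many good $N$ remain. By definition, $N$ is bad precisely when there exist $l_1, l_2 \in \mathcal{C}_N^k$ with $m := l_1 - l_2$ satisfying $0 < |m|^2 \le \rho$; writing $l = l_2$ this means $l$ and $l+m$ both lie in $\mathcal{C}_N^k$. Since there are only finitely many admissible difference vectors $m \in B_\rho \setminus \{0\}$, it suffices to control, for each fixed such $m$, the set of $N$ admitting a witness $l$ with $l, l+m \in \mathcal{C}_N^k$.

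First I would localize the witness. From $|l+m|^2 - |l|^2 = 2\,l\cdot m + |m|^2$ together with $|l|^2, |l+m|^2 \in [N-k, N+k]$ one gets $|2\,l\cdot m + |m|^2| \le 2k$, so that the integer $c := l\cdot m$ ranges over the fixed finite set $J_m := \{c \in \mathbb{Z} : |2c + |m|^2| \le 2k\}$, of cardinality at most $2k+1$ and independent of $N$. Hence every witness lies on one of finitely many rational planes $\Pi_{m,c} := \{l \in \mathbb{Z}^3 : l \cdot m = c\}$, $c \in J_m$, and $|l|^2 \in [N-k, N+k]$ forces $N$ to lie within distance $k$ of the integer $|l|^2$. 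Consequently every bad $N$ lies in the thickened set $\big(\bigcup_{m,\,c} \mathcal{S}_{m,c}\big) + \{-k, \dots, k\}$, where $\mathcal{S}_{m,c} := \{|l|^2 : l \in \Pi_{m,c}\} \subset \mathbb{N}$ is the set of norm-values attained on the plane coset.

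The heart of the argument is a density statement for each $\mathcal{S}_{m,c}$. Choosing a basis $u, v$ of the rank-two lattice $\Lambda_m := \{x \in \mathbb{Z}^3 : x\cdot m = 0\}$ and parametrizing $\Pi_{m,c}$ by the two resulting integer coordinates, the value $|l|^2$ becomes an inhomogeneous positive-definite integral binary quadratic form in these coordinates; after completing the square and clearing a bounded denominator this reduces to the values of a homogeneous positive-definite binary quadratic form taken along a fixed residue class. By the classical Landau--Bernays theorem, the number of integers up to $X$ represented by a positive-definite binary quadratic form is $O(X/\sqrt{\log X}) = o(X)$, and the affine reduction preserves this, so each $\mathcal{S}_{m,c}$ has asymptotic density zero.

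Finally, $\bigcup_{m,c}\mathcal{S}_{m,c}$ is a finite union of density-zero sets, hence density zero, and thickening by the bounded interval $\{-k,\dots,k\}$ multiplies counting functions by at most $2k+1$ and so preserves density zero. Therefore the bad $N$ form a set of density zero, its complement is infinite, and for each good $N$ one has $(\mathcal{C}_N^k - \mathcal{C}_N^k) \cap B_\rho = \{0\}$. The main obstacle is precisely the density-zero input of the third step: a naive incidence count (summing witnesses $l$ over all $N \le X$) only yields $O(X)$ and cannot distinguish good from bad $N$, so it is essential to count instead the \emph{distinct} norm-values $\mathcal{S}_{m,c}$ and to invoke the arithmetic fact that binary quadratic forms represent only a vanishing proportion of the integers.
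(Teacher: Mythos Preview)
The paper does not prove this statement itself; it simply defers to Mallet-Paret and Sell \cite{mal-par}. Your argument is correct and is essentially the one given there: reduce to the finitely many nonzero differences $m\in B_\rho$, observe that any witness $l$ must lie on one of the finitely many affine planes $l\cdot m=c$ with $c\in J_m$, parametrize each such plane by two integer coordinates so that $|l|^2$ becomes an inhomogeneous positive-definite integral binary quadratic form, and invoke the Landau--Bernays theorem to conclude that the set of represented values has density zero in $\mathbb N$; a finite union followed by a bounded thickening then shows the bad $N$ have density zero.

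The only place worth one extra line is the passage from the inhomogeneous form to the classical (homogeneous) Bernays statement. Your ``complete the square and clear a bounded denominator'' is exactly right: writing $|l_0+au+bv|^2=q(a,b)+L(a,b)+\zeta$ with $q$ positive-definite and integral, one finds an integer $M$ and a constant $r$ such that $M^2\big(|l|^2-r\big)$ lies in the value set of a homogeneous form $X^2+DY^2$ at integer points, whence density zero transfers back to $\mathcal S_{m,c}$ by an affine change. With that detail filled in, the proof is complete and matches the cited reference.
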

\begin{proof}
The proof of the theorem is given in \cite{mal-par}.
\end{proof}
Since the Leray projector $P$ is diagonal in the Fourier basis and up to this projector, the operator $F'(W)z$ is a sum of multiplication operators, it can be written in the Fourier basis as a convolution type operator. Then, as not difficult to show using also that the mean values of $W(w)$ and $v$ are zeros, the property \eqref{num_th} implies that
%$$
\begin{multline}\label{numb}
R_{k,N}(F'(w)R_{k,N}z)= R_{k,N}B(R_{k,N}h,\bar W+\bar v)+R_{k,N}B(W+v,R_{k,N}\bar h)=\\=R_{k,N}B(R_{k,N}h,\bar W_{>\rho}+\bar v_{>\rho})+
B(W_{>\rho}+v_{>\rho},R_{k,N}\bar h),
\end{multline}
%$$
where  $u_{>\rho}:=\sum_{|l|^2 >\rho}u_l e^{i l \cdot x}$.
Therefore, due to estimates \eqref{3.B1} and \eqref{3.B2} and the fact that $W'(w)$ is a bounded operator in $H$, we get
%$$
\begin{multline} \label{fin}
\|R_{k,N}(F'(w)R_{k,N}h)\|_H\le C\|\bar W_{>\rho}+\bar v_{>\rho}\|_{H^3}\|R_{k,N}h\|_H+\\+C\|W_{>\rho}+v_{>\rho}\|_{H^1}\|R_{k,N}\bar h\|_{H^2}\le
C_1\|W_{>\rho}+v_{>\rho}\|_{H^1}\|z\|_H.
\end{multline}
%$$
Moreover, due to the interpolation and estimate \eqref{bound_F},
%$$
\begin{equation}\label{W_2}
\|W_{>\rho}\|_{H^1}\le C \|W_{>\rho}\|_H^{1-\beta}\|W_{>\rho}\|_{H^{\frac{3}{2}-\kappa}}^\beta\le C \rho^{\frac{-(1-\beta)(\frac{3}{2}-\kappa)}{2}}\|{W}\|_{H^{\frac{3}{2} - \kappa}} \le \tilde{C} \rho^{\frac{-(1-\beta)(\frac{3}{2}-\kappa)}{2}},
\end{equation}
%$$
where $\beta = \frac{2}{3-2\kappa}$, $0<\kappa<\frac12$ and the analogous estimate holds for $v_{>\rho}$ as well. Since $v$ is bounded in $H^2$, we finally arrive at
%$$
\begin{equation}
\|R_{k,N}(F'(w)R_{k,N}z)\|_H\le \tilde{C}\rho^{\frac{-1+2\kappa}{4}}\left(1+ \|v\|_{H^2}\right)\|z\|_H\le C\rho^{\frac{-1+2\kappa}4}\|z\|_H.
\end{equation}
%$$
Since $\rho$ may be chosen arbitrarily large  the desired estimate \eqref{est_to_be_checked} is obtained with $a(w) = 0$. Therefore we proved the following theorem which is the main result of the paper.
\begin{theorem}
For infinitely many values of $N \in \mathbb{N}$ there exists an $N$-dimensional Lipschitz continuous inertial manifold for the modified-Leray-$\alpha$ model \eqref{main_eq} on a $3D$ torus $\mathbb{T} = [-\pi, \pi]^3$ which contains a global attractor.
\end{theorem}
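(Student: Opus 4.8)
The plan is to obtain the theorem by assembling the ingredients already prepared: the Fourier cut-off, the analytic properties of the truncated nonlinearity, the spatial-averaging estimate, and the abstract result Theorem \ref{main_th}. First I would pass from the genuine equation \eqref{main_eq_w} to its globally modified counterpart \eqref{main_eq_w_mod}. By Lemma \ref{Lem3.cut} the cut-off map satisfies $W(w)=w$ on the absorbing ball $\mathcal B_3$ of \eqref{abs_ball}, and since the global attractor $\mathcal A$ lies inside $\mathcal B_3$ by Theorem \ref{attractor}, the two equations generate identical semigroups on a neighbourhood of $\mathcal A$. Hence any inertial manifold for \eqref{main_eq_w_mod} containing $\mathcal A$ is automatically an inertial manifold for \eqref{main_eq_w}, so it suffices to treat the globally Lipschitz problem \eqref{main_eq_w_mod}, to which the abstract machinery applies.

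Next I would verify the hypotheses of Theorem \ref{main_th} for $\mathcal F(w)=F(W(w))$. Global boundedness, global Lipschitz continuity and Gateaux differentiability are exactly the content of Proposition \ref{Prop3.Flip}, fixing once and for all a Lipschitz constant $L$ that depends only on $\nu$, $\alpha$ and $\|v\|_{H^1}$. The only substantial step is the spatial-averaging inequality \eqref{est_to_be_checked}. Here I would use the factorisation \eqref{Frechet} of $F'(w)$ into two transport-type terms, observe that modulo the Fourier-diagonal Leray projector the operator is of convolution type, and invoke the number-theoretic theorem to select infinitely many $N$ for which $(\mathcal C_N^k-\mathcal C_N^k)\cap B_\rho=\{0\}$. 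Because the mean values of $W(w)$ and of $v$ vanish, this gap property collapses $R_{k,N}\circ F'(w)\circ R_{k,N}$ to the purely high-frequency expression \eqref{numb}, and the bounds \eqref{fin}--\eqref{W_2} then give $\|R_{k,N}(F'(w)R_{k,N}z)\|_H\le C\rho^{(-1+2\kappa)/4}\|z\|_H$ with $C$ independent of $N$ and $k$. Thus the averaging constant $\delta$ can be forced below any prescribed threshold by enlarging the frequency-ball radius $\rho$.

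The delicate point is the order in which the parameters are fixed so that the smallness condition
\[
\frac{L^2}{k^2}+2\frac{\delta^2}{\nu^2}+2\sqrt2\,\frac{L^2}{k\nu}<1
\]
of Theorem \ref{main_th} is met without circularity; note that here $\nu$ plays the role of the spectral-gap constant $\rho$ of the abstract Theorem \ref{mau}, since the spectrum of the Stokes operator consists of integers (sums of three squares), so consecutive distinct eigenvalues of $\nu A$ differ by at least $\nu$, whereas the symbol $\rho$ in the number-theoretic theorem and in \eqref{W_2} is an unrelated frequency-ball radius I am free to send to infinity. With $L$ and $\nu$ fixed, I would first choose $k$ so large that $L^2/k^2+2\sqrt2\,L^2/(k\nu)<\tfrac12$; then, with this $k$ frozen, choose $\rho$ large enough that the value $\delta=C\rho^{(-1+2\kappa)/4}$ — whose constant $C$ depends on neither $k$ nor $N$ — satisfies $2\delta^2/\nu^2<\tfrac12$. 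For this admissible pair $(k,\delta)$ the number-theoretic theorem supplies infinitely many $N$ for which \eqref{est_to_be_checked} holds, and Theorem \ref{main_th} then yields, for each such $N$, an $N$-dimensional Lipschitz inertial manifold $\mathcal M_w$ for the truncated $w$-equation that is a graph over the subspace spanned by the Fourier modes with $|n|^2\le N$. I expect this bookkeeping to be the main obstacle, since one must be certain that $L$ is genuinely independent of $k$, $\delta$ and $N$, and that the $\delta$ furnished by the harmonic-analysis estimate can be driven small while $k$ stays fixed.

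Finally I would transfer the conclusion back to \eqref{main_eq}. The manifold $\mathcal M_w$ contains $\mathcal A$ because the attractor consists of bounded complete trajectories, each of which is tracked exponentially by a trajectory lying on the invariant manifold; boundedness as $t\to-\infty$ forces the two to coincide, so $\mathcal A\subset\mathcal M_w$. Undoing the fixed shift $w=u-v$ introduced before \eqref{main_eq_w}, the set $\mathcal M:=v+\mathcal M_w$ is a Lipschitz graph of the same dimension over the same low-mode subspace; it is invariant, exponentially attracting and contains the global attractor of the original modified-Leray-$\alpha$ dynamics \eqref{main_eq}, which completes the proof.
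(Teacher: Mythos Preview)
Your proposal is correct and follows essentially the same route as the paper: reduce to the truncated equation via Lemma \ref{Lem3.cut}, invoke Proposition \ref{Prop3.Flip} for the analytic hypotheses, use the number-theoretic lemma together with \eqref{numb}--\eqref{W_2} to force \eqref{est_to_be_checked} with arbitrarily small $\delta$, and then apply Theorem \ref{main_th}. Your explicit discussion of the order in which $k$, $\rho$ and $N$ are chosen, and your observation that the $\nu$ appearing in the smallness condition of Theorem \ref{main_th} is the spectral-gap parameter (distinct from the frequency-ball radius $\rho$ in the harmonic-analysis lemma), make precise a point the paper leaves largely implicit, but the underlying argument is the same.
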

\begin{remark} It is not difficult to show based on estimate \eqref{3.hol} that the non-linearity $F$ satisfies the estimate
$$
\|F(w_1)-F(w_2)-F'(w_1)(w_1-w_2)\|_H\le C\|w_1-w_2\|_H\|w_1-w_2\|_{H^3}
$$
which in turn allows us to verify that the inertial manifolds $\mathcal M=\mathcal M_N$ are not only Lipschitz continuous, but also $C^{1+\eb}$-smooth for some small $\eb=\eb(N)>0$, see \cite{Zel-Ko} for more details.
\end{remark}

\begin{remark}
Let us consider the modified Navier-Stokes-$\alpha$ (NS-$\alpha$) model
\begin{equation}\label{LANS_a}
\begin{cases}
\partial_t u + \nu \Delta u - u \times (\nabla \times \bar{u}) + \nabla p = g(x),\\
\nabla \cdot u = \nabla \cdot \bar{u}  = 0,\\
  u= \bar{u}- \alpha^2 \Delta \bar{u},\\
  u(x,0)=u_0(x).
\end{cases}
\end{equation}
Writing the non-linearity in a more explicit way, we obtain
\begin{equation}
-u \times (\nabla \times \bar{u})=(u\cdot\nabla)\bar{u} - \sum_{j=1}^3 u^j \nabla \bar{u}^j.
\end{equation}
Thus, we see that the first term of the non-linearity coincides with the non-linearity of the modified Leray-$\alpha$ model and the second one also has a similar structure. Therefore applying the same cut-off function and acting exactly  as in the case of the modified Leray-$\alpha$ model we may prove the existence of an inertial manifold for the modified NS-$\alpha$ model.
\begin{theorem}
There exist infinitely many values of $N$'s such that inequality \eqref{est_to_be_checked} is fulfilled for the modified NS-$\alpha$ model \eqref{LANS_a} in a three-dimensional case with periodic boundary conditions. Therefore it possesses a finite-dimensional inertial manifold.
\end{theorem}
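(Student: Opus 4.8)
The plan is to reduce the NS-$\alpha$ problem to the ML-$\alpha$ analysis already carried out, exploiting the identity $-u\times(\nabla\times\bar u)=(u\cdot\nabla)\bar u-\sum_{j=1}^3u^j\nabla\bar u^j$ recorded above. After the Leray projection the first summand is precisely $B(u,\bar u)$, whereas the second one gives a new bilinear form
\[
\tilde B(p,q):=P\Big(\sum_{j=1}^3 p^j\nabla q^j\Big),\qquad \big(\tilde B(p,q)\big)^i=P\Big(\sum_{j=1}^3 p^j\partial_i q^j\Big).
\]
I would then subtract a stationary solution $v$, pass to $w=u-v$ exactly as in \eqref{main_eq_w}, and apply the same Fourier cut-off $W$ of \eqref{3.wdef}, arriving at the truncated nonlinearity
\[
F(w)=B(W,\bar W)+B(W,\bar v)+B(v,\bar W)-\tilde B(W,\bar W)-\tilde B(W,\bar v)-\tilde B(v,\bar W).
\]
By Lemma \ref{Lem3.cut} the truncated and the original equations coincide on the absorbing ball, so it suffices to build an inertial manifold for the truncated system by checking the hypotheses of Theorem \ref{main_th}.

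Because $\tilde B$ has the same pointwise structure as $B$ — an undifferentiated factor times a once-differentiated factor — it obeys the very same bounds \eqref{3.B1} and \eqref{3.B2}. Hence the global boundedness, global Lipschitz continuity and Gateaux differentiability of the truncated nonlinearity follow verbatim from the proof of Proposition \ref{Prop3.Flip}, using the properties \eqref{bound_F}, \eqref{Lipsch_F}, \eqref{est-der} of $W$ from Lemmas \ref{Lem3.b-lip} and \ref{Lem3.dif}. To license the cut-off I would also redo the dissipative $H^3$-estimates of Section 3; these go through since $\tilde B$ satisfies the same estimates as $B$ and since the basic energy identity now reads $(P[u\times(\nabla\times\bar u)],u)=0$, which holds pointwise because $u\times(\cdot)$ is orthogonal to $u$.

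The core of the argument is the key inequality \eqref{est_to_be_checked}. Setting $h:=W'(w)z$ as in \eqref{h}, the derivative of the truncated nonlinearity is, up to the Leray projector $P$, again a sum of products of an undifferentiated factor with a once-differentiated factor; since $P$ and $W'(w)$ are diagonal in the Fourier basis, this derivative is a convolution-type operator that commutes with $R_{k,N}$ exactly as in the ML-$\alpha$ case. Therefore the number-theoretic property \eqref{num_th} applies word for word: in $R_{k,N}\circ F'(w)\circ R_{k,N}$ the undifferentiated factor is forced to carry frequencies in $\mathcal C_N^k-\mathcal C_N^k$, which by \eqref{num_th} sit either at the origin or outside $B_\rho$. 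Since $W(w)$ and $v$ have zero mean, the origin contributes nothing, so the decomposition \eqref{numb} holds with $\tilde B$ included and the spatial average is still zero. The estimates \eqref{fin} and the decay \eqref{W_2} then give $\|R_{k,N}(F'(w)R_{k,N}z)\|_H\le C\rho^{(-1+2\kappa)/4}\|z\|_H$, which is below $\delta\|z\|_H$ once $\rho$ is large; this is \eqref{est_to_be_checked} for infinitely many $N$, and Theorem \ref{main_th} then yields the inertial manifold.

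The step I expect to require the most care is precisely the preservation of the zero spatial average by the new term $\tilde B$: the whole method would collapse if $\tilde B$ contributed a surviving scalar (let alone matrix) multiplier at frequency zero, since the spatial-averaging principle forbids matrix averages. What saves the argument is the Fourier-space cut-off, which keeps $W(w)$ zero-mean with rapidly decaying high-frequency $H^1$-tails, so that after the number-theoretic restriction no zero-frequency part remains; verifying this carefully, rather than merely asserting that NS-$\alpha$ behaves like ML-$\alpha$, is the crux. The accompanying dissipative estimates are then routine given the orthogonality above.
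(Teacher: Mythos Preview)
Your proposal is correct and follows exactly the route the paper indicates: the paper's own ``proof'' is merely the observation that the extra term $\sum_j u^j\nabla\bar u^j$ has the same product structure as $B(u,\bar u)$, so the cut-off $W$, the boundedness/Lipschitz/differentiability checks, and the number-theoretic verification of \eqref{est_to_be_checked} all carry over verbatim. You have in fact supplied more detail than the paper does---in particular your explicit identification of the crux (that $\tilde B$ contributes no zero-frequency part after the cut-off, so the spatial average remains zero) and the orthogonality $(u\times(\nabla\times\bar u),u)=0$ for the energy estimate---but the strategy is identical.
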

\end{remark}
\begin{remark}
Since the non-linearity in the simplified Bardina model
\begin{equation}\label{bar}
 \begin{cases}
  u_t - \nu \Delta u + (\bar{u} \cdot \nabla)\bar{u} + \nabla p=f,\\
 \nabla \cdot u = \nabla \cdot \bar{u}  = 0,\\
  u= \bar{u}- \alpha^2 \Delta \bar{u},\\
  u(0)=u_0
 \end{cases}
\end{equation}
is even milder than in the case of the modified Leray-$\alpha$ model it is obvious that the arguments provided in this paper will lead to the following theorem.
\begin{theorem}
The simplified Bardina model \eqref{bar} on a 3D torus possesses an $N$-dimensional inertial manifold for infinitely many values of $N$'s.
\end{theorem}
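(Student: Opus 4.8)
The plan is to rerun the entire scheme developed above for the modified Leray-$\alpha$ model, exploiting the fact that in the Bardina case \emph{both} arguments of the bilinear form now carry the smoothing filter. First I would apply the Helmholtz--Leray projection $P$ to \eqref{bar} and use $u=\bar u+\alpha^2A\bar u$ to rewrite it as
\begin{equation}
u_t+\nu Au+B(\bar u,\bar u)=f,\qquad \bar u=(1+\alpha^2A)^{-1}u,
\end{equation}
together with the associated stationary problem $\nu Av+B(\bar v,\bar v)=f$. Subtracting and setting $w:=u-v$, the elementary identity $B(\bar u,\bar u)-B(\bar v,\bar v)=B(\bar w,\bar w)+B(\bar w,\bar v)+B(\bar v,\bar w)$ (with $\bar w=\bar u-\bar v$) shows that $w$ solves
\begin{equation}
\pt w+\nu Aw+F(w)=0,\qquad F(w):=B(\bar w,\bar w)+B(\bar w,\bar v)+B(\bar v,\bar w),
\end{equation}
which differs from the Leray-$\alpha$ nonlinearity $F(w)=B(w,\bar w)+B(w,\bar v)+B(v,\bar w)$ only in that the first slot of each $B$ is now also filtered.

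Next I would reproduce the dissipative and regularity estimates of Section 3. Because the filter $(1+\alpha^2A)^{-1}$ gains two derivatives, the nonlinearity $B(\bar w,\bar w)$ is strictly more regular than $B(w,\bar w)$, so the energy estimates culminating in Theorem \ref{attractor}, as well as the $H^2$-bound \eqref{3.disV} on the stationary solution $v$, go through verbatim (indeed more easily), yielding a global attractor $\mathcal A$ bounded in $H^3$ and an absorbing ball $\mathcal B_3$ of the form \eqref{abs_ball}. I would then apply the very same Fourier cut-off $W$ defined in \eqref{3.wdef} and study the truncated equation $\pt w+\nu Aw+F(W(w))=0$ with $F(W)=B(\bar W,\bar W)+B(\bar W,\bar v)+B(\bar v,\bar W)$; by Lemma \ref{Lem3.cut} this coincides with the original equation on $\mathcal B_3$, hence on $\mathcal A$.

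The global boundedness, global Lipschitz continuity and Gateaux differentiability of $F\circ W$ as a map $H\to H$ follow exactly as in Proposition \ref{Prop3.Flip}, combining the bounds \eqref{bound_F}, \eqref{Lipsch_F}, \eqref{est-der} for $W$ with the estimates \eqref{3.B1} and \eqref{3.B2} for $B$; the extra smoothing on the first slot only helps. The key step, as before, is the spatial-averaging estimate \eqref{est_to_be_checked}. By Lemma \ref{Lem3.dif} and the chain rule the Gateaux derivative is $F'(w)z=B(\bar h,\bar W+\bar v)+B(\bar W+\bar v,\bar h)$ with $h:=W'(w)z$, again a sum of multiplication (hence convolution-type) operators composed with the diagonal projector $P$. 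Since $W(w)$ and $v$ have zero mean and the filter is a Fourier multiplier preserving this property, the number-theoretic property \eqref{num_th} forces $R_{k,N}\,F'(w)\,R_{k,N}$ to involve only the high-frequency tails $\bar W_{>\rho}+\bar v_{>\rho}$ and $\bar h$, exactly as in \eqref{numb}--\eqref{fin}. Estimating these tails via \eqref{W_2} and the $H^2$-bound on $v$ gives
\begin{equation}
\|R_{k,N}(F'(w)R_{k,N}z)\|_H\le C\rho^{\frac{-1+2\kappa}4}\|z\|_H,
\end{equation}
which can be made smaller than any prescribed $\delta$ by taking $\rho$ large. Thus all hypotheses of Theorem \ref{main_th} hold, and the truncated equation---hence the Bardina model \eqref{bar}, whose relevant (attractor) dynamics it captures---possesses an $N$-dimensional Lipschitz inertial manifold for infinitely many $N$. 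The only point requiring care, which I regard as the main (if minor) obstacle, is verifying that the convolution/zero-average structure underlying \eqref{numb} survives when the first argument is filtered; but since filtering acts diagonally in Fourier space and destroys neither the convolution form nor the mean-zero condition, this is immediate, and the extra decay it provides only sharpens the bound above.
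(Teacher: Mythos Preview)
Your proposal is correct and follows exactly the route the paper indicates: the paper itself gives no separate proof but merely remarks that, since the Bardina non-linearity is even milder (both slots filtered), the arguments for the modified Leray-$\alpha$ model apply verbatim. Your sketch fills in precisely those details---the $w$-equation, the unchanged cut-off $W$, the analogues of Proposition \ref{Prop3.Flip} and of \eqref{numb}--\eqref{fin}---and correctly observes that the extra filtering only makes each step easier.
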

\end{remark}
\begin{remark} To conclude, we mention that the above described method works also for the following model with milder filtering of higher modes
%%$$
%\begin{equation}\label{12-model}
%\begin{cases}
%\pt u+B(u,\bar u)+\nu Au=f,\\
%\bar u+\alpha^2A^{1/2}\bar u=u.
%\end{cases}
%\end{equation}
%%$$
\begin{equation}\label{12-model}
 \begin{cases}
  u_t - \nu \Delta u + (u \cdot \nabla)\bar{u} + \nabla p=f,\\
  u= \bar{u}+ \alpha^2 (-\Delta)^{\frac{1}{2}} \bar{u},\\
  \nabla \cdot u = \nabla \cdot \bar{u}  = 0,\\
  u(0)=u_0.
 \end{cases}
\end{equation}
Indeed, on the one hand, as not difficult to show that the exponent $1/2$ in the second equation of \eqref{12-model} is enough to verify the global well-posedness and regularity of solutions (actually, $1/2$ is {\it exactly} the  critical exponent here and using $(-\Delta)^{\theta}$ with $\theta<\frac12$ will lead to the uniqueness problems similar to the case of classical NSE in 3D). On the other hand, the non-linearity here is still "zero order" under this choice of the exponent $\theta=1/2$ and the spatial averaging method works (actually, $\theta=1/2$ looks again as a critical exponent for the applicability of the spatial averaging method even in the 2D case). We will return to this problem somewhere else. 
\end{remark}

\section*{Acknowledgements}
The author is thankful to Prof. Sergey Zelik, Prof. Vladimir Chepyzhov and Prof. Edriss S. Titi for fruitful discussions and sharing their insights and ideas.

%%%%%%%%%%%%%%%%%%%%%%%%%%%%%%%%%%%%%%%%%%%%%%%%%%%%%%%%%%%%%%%%%%%%%
%\nocite{*}
\bibliography{References}

\end{document}